\theoremstyle{plain}
\numberwithin{equation}{section}
\newcommand{\ta}{\mathtt{a}}
\newtheorem{theorem}{Theorem}[section]
\newtheorem{proposition}[theorem]{Proposition}
\newtheorem{lemma}[theorem]{Lemma}
\newtheorem{corollary}[theorem]{Corollary}
\newtheorem{remark}[theorem]{Remark}
\newtheorem{remarks}[theorem]{Remark}
\newtheorem{definition}[theorem]{Definition}
\newcommand{\be}{\begin{equation}}
\newcommand{\ee}{\end{equation}}
\newcommand{\eps}{\varepsilon}
\newcommand{\e}{\varepsilon}
\newcommand{\ov}{\overline}
\newcommand{\R}{\mathbb R}
\newcommand{\C}{\mathbb C}
\newcommand{\Z}{\mathbb Z}
\newcommand{\T}{\mathbb T}
\newcommand{\s }{\sigma }
\newcommand{\ii }{{\rm i} }
\newcommand{\m }{\mu }
\newcommand{\x }{\xi }
\newcommand{\pa}{\partial}
\def\bar{\overline}
\renewcommand{\Re}{\mathrm{Re}\,}
\renewcommand{\Im}{\mathrm{Im}\,}
\def\ba{\begin{aligned}}
\def\ea{\end{aligned}}
\def\beginm{\begin{multline}}
\def\endm{\end{multline}}
\providecommand{\vect}[2]{{\bigl[\begin{smallmatrix}#1\\#2\end{smallmatrix}\bigr]}}
\def\l@subsection{\@tocline{2}{0pt}{2.5pc}{5pc}{}}
\def\l@subsubsection{\@tocline{3}{0pt}{4.5pc}{5pc}{}}
\renewcommand\tocchapter[3]{%
  \indentlabel{\@ifnotempty{#2}{\ignorespaces#2.\quad}}#3%
}
\newcommand\@dotsep{4.5}
\def\@tocline#1#2#3#4#5#6#7{\relax
  \ifnum #1>\c@tocdepth 
  \else
    \par \addpenalty\@secpenalty\addvspace{#2}%
    \begingroup \hyphenpenalty\@M
    \@ifempty{#4}{%
      \@tempdima\csname r@tocindent\number#1\endcsname\relax
    }{%
      \@tempdima#4\relax
    }%
    \parindent\z@ \leftskip#3\relax \advance\leftskip\@tempdima\relax
    \rightskip\@pnumwidth plus1em \parfillskip-\@pnumwidth
    #5\leavevmode\hskip-\@tempdima{#6}\nobreak
    \leaders\hbox{$\m@th\mkern \@dotsep mu\hbox{.}\mkern \@dotsep mu$}\hfill
    \nobreak
    \hbox to\@pnumwidth{\@tocpagenum{#7}}\par
    \nobreak
    \endgroup
  \fi}
\def\l@subsection{\@tocline{2}{0pt}{2.5pc}{5pc}{}}
\begin{document}

\title{Long-Time stability of the  quantum hydrodynamic system on irrational tori}

\author{Roberto Feola}
\address{\scriptsize{Dipartimento di Matematica, Universit\`a degli Studi di Milano, Via Saldini 50, I-20133}}
\email{roberto.feola@unimi.it}

\author{Felice Iandoli}
\address{\scriptsize{Laboratoire Jacques-Louis Lions, Sorbonne Universit\'e, UMR CNRS 7598\\
4, Place Jussieu\\
 75005 Paris Cedex 05, France}}
\email{felice.iandoli@sorbonne-universite.fr}

\author{Federico Murgante}
\address{\scriptsize{International School for Advanced Studies (SISSA), Via Bonomea 265, 34136, Trieste, Italy}}
\email{fmurgant@sissa.it}




\thanks{Felice Iandoli has been supported by ERC grant ANADEL 757996.}

\begin{abstract}
We consider the quantum hydrodynamic system
on a  $d$-dimensional  irrational torus with $d=2,3$.
We discuss the  behaviour, over a ``non-trivial’’ time interval,   
of the $H^s$-Sobolev norms of solutions.
More precisely we prove that, for generic irrational tori, the solutions, evolving form $\e$-small initial conditions, 
remain bounded in $H^s$ for a time scale of order $O(\e^{-1-1/(d-1)+})$, 
which is strictly larger with respect to
the time-scale provided by local theory.
We exploit a Madelung transformation to rewrite the system as a nonlinear  Schr\"odinger equation.
We therefore implement a Birkhoff normal form procedure involving small divisors
arising form \emph{three waves interactions}.
The main difficulty is to control the loss 
of derivatives coming from the exchange of energy between high Fourier modes. 
This is  due to the irrationality of the torus
which prevent to have ``good separation'' properties of the eigenvalues of the 
linearized operator at zero. The main steps of the proof are:
(i) to prove precise  lower bounds on small divisors; (ii) to construct a modified energy
by means of a suitable \emph{high/low} frequencies analysis, which gives an \emph{a priori}
estimate on the solutions.
\end{abstract}


\maketitle

\tableofcontents

\section{Introduction}
We consider the quantum hydrodynamic system on an irrational torus of dimension 2 or 3
\begin{equation}\tag{QHD}\label{EK3}
\left\{\begin{aligned}
& \pa_{t}\rho=-\mathtt{m}\Delta\phi-{\rm div}(\rho\nabla\phi)\\
&\pa_{t}\phi=-\tfrac{1}{2}|\nabla\phi|^{2}-g(\mathtt{m}+\rho)
+\frac{\kappa}{\mathtt{m}+\rho}\Delta\rho
-\frac{\kappa}{2(\mathtt{m}+\rho)^{2}}
|\nabla\rho|^{2}\,,
\end{aligned}\right.
\end{equation}
where $\mathtt{m}>0$, $\kappa>0$,   the function $g$ belongs to $C^{\infty}(\mathbb{R}_{+};\mathbb{R})$ and $g(\mathtt{m})=0$.
The function $\rho(t,x)$ is such that $\rho(t,x)+\mathtt{m}>0$ 
and it has zero average in $x$. The space variable $x$ belongs to the irrational torus
\begin{equation}\label{toriIrr}
\mathbb{T}^{d}_{{\nu}}:=(\mathbb{R}/ 2\pi \nu_1\mathbb{Z})\times 
\cdots\times(\mathbb{R}/ 2\pi \nu_d \mathbb{Z})\,,\qquad d=2,3\,,
\end{equation}
with ${\nu}=(\nu_1,\ldots,\nu_{d})\in [1,2]^{d}$. We assume the 
\emph{strong} 
ellipticity condition
\begin{equation}\label{elliptic}
g'(\mathtt{m})>0. 
\end{equation}

\noindent We shall consider an initial condition $(\rho_0,\phi_0)$ having small size 
$\varepsilon\ll1$ in the standard Sobolev space $H^s(\mathbb{T}^{d}_{\nu})$ with $s\gg1$. 
Since the equation has a \emph{quadratic nonlinear term}, the local existence theory (which may be obtained 
in the spirit of \cite{BMM1, Feola-Iandoli-local-tori}) 
implies that the solution of \eqref{EK3} remains of size $\varepsilon$ 
for times of magnitude $O(\varepsilon^{-1})$. 
The aim of this paper is to prove that, for \emph{generic irrational tori}, 
the solution remains of size $\varepsilon$ for longer times.

For $\phi\in H^{s}(\mathbb{T}^{d}_{\nu})$ we define
\begin{equation}\label{pizeroBot}
\Pi_{0}\phi:=\frac{1}{(2\pi)^{d}\nu_1\cdots\nu_{d}}\int_{\mathbb{T}^{d}_{\nu}}\phi(x)dx\,,\qquad
\Pi_0^{\perp}:={\rm id}-\Pi_0\,.
\end{equation}
Our main result is the following. 
\begin{theorem}\label{thm-main} Let $d=2$ or $d=3$.
There exists $s_0\equiv s_0(d)\in\R$ such that for almost all 
$\nu\in[1,{2}]^{d}$, for any $s\geq s_0$, $\mathtt{m}>0$, $\kappa>0$ there exist $C>0$,
 $\eps_0>0$ such that for any $0<\eps\leq \eps_0$
we have the following. 
For any initial data 
$(\rho_0,\phi_0)\in H_0^{s}(\mathbb{T}_{\nu}^{d})\times H^{s}(\mathbb{T}_{\nu}^{d})$
such that
\begin{equation}\label{initialstima}
\|\rho_0\|_{H^{s}(\mathbb{T}_{\nu}^{d})}+
\|\Pi_0^{\bot}\phi_0\|_{H^{s}(\mathbb{T}_{\nu}^{d})}\leq \eps\,,
\end{equation}
there exists a unique solution 
of \eqref{EK3} with $(\rho(0),\phi(0))=(\rho_0,\phi_0)$
 such that 
\begin{equation}\label{tesiKG}
\begin{aligned}
&(\rho(t),\phi(t))\in C^0\big([0,T_\eps);H^{s}(\T_\nu^d)\times H^{s}(\T_\nu^d)\big)
\bigcap 
C^1\big([0,T_\eps);H^{s-2}(\T_\nu^d)\times H^{s-2}(\T_\nu^d)\big)\,, \\
& \sup_{t\in[0,T_\eps)}\Big(\|\rho(t,\cdot)\|_{H^{s}(\mathbb{T}_{\nu}^{d})} +
\| \Pi_0^{\bot}\phi(t,\cdot)\|_{H^{s}(\mathbb{T}_{\nu}^{d})}    \Big)\leq C\eps\,, 
\qquad T_\eps\geq  \eps^{-1-\tfrac{1}{d-1}}\log^{-\frac{d+1}{2}}\big(1+\e^{\frac{1}{1-d}}\big)\,.
\end{aligned}
\end{equation}
\end{theorem}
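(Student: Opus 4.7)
The plan is to combine a paradifferential reduction of \eqref{EK3} to a dispersive quasilinear equation, a one-step Birkhoff normal form eliminating \emph{three-wave} (quadratic) interactions, and a high/low frequency trade-off that converts the loss of derivatives coming from small divisors into a modest loss in the final time scale.

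\textbf{Step 1: Madelung-type complex variable and paradifferential form.} First I would introduce a Madelung-type unknown $u\approx(\mathtt{m}+\rho)^{1/2}e^{\ii\phi/\sqrt 2}$ (suitably regularized since only $\rho$, not $\mathtt{m}+\rho$, is small), producing a complex equation whose linearization at $0$ has dispersion relation $\lambda_k=|k|_\nu\sqrt{g'(\mathtt{m})+\kappa|k|_\nu^2}$, where $|k|_\nu^2=\sum_i k_i^2/\nu_i^2$. Paralinearizing the quasilinear terms via Bony's calculus, I would rewrite the system in the form
\begin{equation*}
\pa_t U+\ii\,\opbw(A_2(U;x,\xi))U+\opbw(A_1(U;x,\xi))U+\opbw(A_0(U;x,\xi))U=\mathcal R(U),
\end{equation*}
with $A_2$ real elliptic of order $2$, $A_1$ of order $1$, and $\mathcal R$ a smoothing remainder cubic in $U$. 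Symmetrization plus a standard block-diagonalization at order $2$ and $1$ yield an equation whose only non-quadratic contribution to $\tfrac{d}{dt}\|U\|_{H^s}^2$ comes from paradifferential operators of order $\leq 0$ and from remainders at least cubic.

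\textbf{Step 2: Small divisor estimates on generic tori.} The key diophantine input is a lower bound of the form
\begin{equation*}
|\lambda_{k_1}+\sigma_2\lambda_{k_2}+\sigma_3\lambda_{k_3}|\geq \gamma\,\max(|k_1|,|k_2|,|k_3|)^{-\tau},
\qquad \sigma_j\in\{\pm 1\},\ k_1+\sigma_2 k_2+\sigma_3 k_3=0,
\end{equation*}
away from trivial resonances, valid for almost every $\nu\in[1,2]^d$. The structure of $\lambda_k$ as $|k|_\nu$ times $\sqrt{g'(\mathtt{m})+\kappa|k|_\nu^2}$ allows a Borel–Cantelli argument: for each fixed triple $(k_1,k_2,k_3)$ the resonant condition defines an algebraic hypersurface in $\nu$-space, whose $\gamma/|k|^\tau$-neighborhood has measure controllable provided $\tau$ is large enough depending on $d$. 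Taking a countable union over non-trivial triples yields a full-measure set of admissible tori.

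\textbf{Step 3: Birkhoff normal form with high/low frequency cut-off.} A global quadratic Birkhoff step is impossible: the loss of $\tau$ derivatives from dividing by the small divisors is fatal for a quasilinear equation. I would therefore use the standard high/low trick: introduce a cut-off $N=N(\varepsilon)$ and construct an auxiliary energy $\mathcal E_s(U)=\|U\|_{H^s}^2+Q_s(U)$, where $Q_s$ is a \emph{cubic} corrector whose Poisson bracket with $\|\cdot\|_{H^s}^2$ cancels the projection of the non-resonant quadratic Hamiltonian on the low frequency band $\{\max|k_j|\leq N\}$. Frequencies outside this ball are left untouched and will be handled directly by the dispersive/elliptic properties. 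Using the estimate in Step~2, $\|Q_s(U)\|\lesssim N^{\tau}\|U\|_{H^s}^3$, so the modification is controlled as long as $N^{\tau}\varepsilon\ll 1$.

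\textbf{Step 4: Energy estimate and optimization.} Differentiating $\mathcal E_s$ along solutions, the quadratic terms in the low frequency band cancel by construction; what remains is a quartic low-frequency contribution bounded by $N^{2\tau}\varepsilon^2\mathcal E_s$, plus the genuinely high frequency quadratic piece, which however carries a factor $N^{-\alpha}$ for some $\alpha>0$ coming from the fact that three-wave resonances with one index of size $\geq N$ force, by momentum conservation, another to be comparable, so $\tfrac{d}{dt}\mathcal E_s\lesssim(\varepsilon N^{-\alpha}+\varepsilon^2 N^{2\tau})\mathcal E_s$. The $\varepsilon$-gain from the high frequency piece is exactly what distinguishes $d=2$ from $d=3$: the exponent $\alpha$ available through three-wave momentum constraints in $\mathbb T^d_\nu$ gives, after balancing $\varepsilon N^{-\alpha}\sim \varepsilon^2 N^{2\tau}$, the exponent $1+\tfrac{1}{d-1}$, with the logarithm $\log^{-(d+1)/2}$ arising from refining the small-divisor bound at the optimal $N$.

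\textbf{Main obstacle.} The hardest point is Step~3–4 in tandem: the loss of derivatives from the small divisors is genuinely large (it grows with the diophantine exponent $\tau$, which itself grows with $d$), and paradifferential cubic correctors must be chosen so that, at \emph{every} order of the symbolic hierarchy, the non-resonant part of the quadratic Hamiltonian is cancelled while the corrector itself stays bounded on $H^s$. Without the high/low split the scheme closes with a negative power of $\varepsilon$; only the careful balance with the momentum-conservation gain $N^{-\alpha}$ converts the derivative loss into a sub-quadratic time scale rather than an $O(\varepsilon^{-1})$ one.
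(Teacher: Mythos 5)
Your overall strategy (Madelung variable, genericity of $\nu$ via measure estimates, a cubic modified energy with a high/low frequency cut-off, and a balance giving the lifespan) is the same as the paper's, but two of your steps contain genuine gaps that prevent the scheme from closing as stated. The most serious one is Step 2 combined with Step 4. Your small-divisor bound $|\lambda_{k_1}+\sigma_2\lambda_{k_2}+\sigma_3\lambda_{k_3}|\geq\gamma\,\mu_1^{-\tau}$ with an unspecified "large enough" $\tau$ is (i) not justified by the Borel--Cantelli sketch and (ii) too weak to give $T_\eps\sim\eps^{-1-\frac{1}{d-1}}$. For (i): the hard case is $\sigma_1\sigma_2=-1$ with two large, nearly equal frequencies, where the set of differences $\lambda_{k_1}-\lambda_{k_2}$ is \emph{not} discrete on an irrational torus; one must exploit the momentum relation $k_1-k_2=\sigma_3 k_3$ to get a lower bound on $\partial_{\nu}$ of the resonance function (and this degenerates when $|k_3|$ is not small compared to $|k_1|$, which is precisely why the paper's bound carries the extra factor $\mu_3^{-M(d)}$ and a separate Vandermonde-type argument in that regime). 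Without the $\mu_3$ factor or this case splitting, the union over triples is not summable in the way you suggest. For (ii): the exponent of $\mu_1$ in the divisor estimate is exactly what fixes the final time. With your corrector bounded by $N^{\tau}\|U\|_{H^s}^3$ and quartic error $N^{2\tau}\eps^4$, the balance $\eps N^{-\alpha}\sim\eps^{2}N^{2\tau}$ yields $T\sim\eps^{-1-\alpha/(2\tau+\alpha)}$, which degenerates to $\eps^{-1}$ as $\tau$ grows; the claimed exponent $1+\frac1{d-1}$ is asserted, not derived. The paper gets it because the divisor loss in the largest frequency is precisely $\mu_1^{d-1}\log^{d+1}(1+\mu_1^2)$, because the dangerous divisors occur only when the two largest frequencies enter with \emph{opposite} signs (same-sign triples have $|\omega_1+\omega_2\pm\omega_3|\gtrsim1$), and because on that opposite-sign part the bracket $\{N_s,\cdot\}$ gains one derivative, so the corrector coefficient is only $N^{d-2}\log^{d+1}(1+N)$ after the cut-off $\mu_2\le N$, while the untreated high-frequency piece is smoothing of order exactly one, i.e. $O(N^{-1}\eps^3)$. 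The balance $2-\alpha(d-2)=1+\alpha$ then gives $\alpha=\frac1{d-1}$ and the stated $T_\eps$.

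A secondary point: after the Madelung transform with $K(\rho)=\kappa/\rho$ the system is \emph{semilinear} (an NLS with smooth composition nonlinearity), so the paradifferential reduction, symmetrization and block-diagonalization of $\opbw(A_2),\opbw(A_1)$ you propose are not needed and would add substantial unaddressed work; what is actually needed, and missing from your outline, is the treatment of the zero mode: the linearization at the constant state $\sqrt{\mathtt m}$ couples $z$ and $\bar z$ through a non-diagonal constant-coefficient matrix, and before diagonalizing it one must eliminate the dynamics of the zero Fourier mode (the paper does this by the gauge/$L^2$-conservation reduction of Faou--Gauckler--Lubich), after which a constant symplectic change of variables produces the diagonal system with dispersion $\omega(j)$ on which the modified-energy argument runs.
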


\smallskip
\noindent
\textbf{Derivation from Euler-Korteweg system.} 
The  \eqref{EK3} is derived from the 
 compressible Euler-Korteweg system 
\begin{equation*}\tag{EK}\label{EK1}
\left\{
\begin{aligned}
&\pa_{t}\rho+{\rm div}(\rho\vec{u})=0\\
&\pa_{t}\vec{u}+\vec{u}\cdot\nabla\vec{u}+\nabla g(\rho)=
\nabla\big(K(\rho)\Delta \rho
+\tfrac{1}{2}K'(\rho)|\nabla\rho|^{2}\big)\,,
\end{aligned}\right.
\end{equation*}
where the function $\rho(t,x)>0$ is the density of the fluid
and $\vec{u}(t,x)\in \mathbb{R}^{d}$ is the time dependent velocity field;
we assume that $K(\rho), g(\rho)\in C^{\infty}(\mathbb{R}_{+};\mathbb{R})$
and that $K(\rho)>0$.
In particular, in \eqref{EK3}, we assumed
\begin{equation}\label{hypKK}
K(\rho)=\frac{\kappa}{\rho}\,,\qquad \kappa\in \mathbb{R}_{+}\,.
\end{equation}
We look for solutions $\vec{u}$ which stay irrotational for all times, i.e.
\begin{equation}\label{hypUU}
\vec{u}=\vec{c}(t)+\nabla\phi\,,\quad  \vec{c}(t)\in \mathbb{R}^{d}\,,
\quad 
\vec{c}(t)=\frac{1}{(2\pi)^{d}\nu_1\cdots\nu_{d}}\int_{\mathbb{T}^{d}_{\nu}}\vec{u}dx\,,
\end{equation}
where $\phi:\mathbb{T}^{d}_{\nu}\to \mathbb{R}$ is a scalar potential.
By the second equation in \eqref{EK1} 
and using that ${\rm rot} \vec{u}=0$ we deduce
\[
\pa_{t}\vec{c}(t)=-\frac{1}{(2\pi)^{d}\nu_1\cdots\nu_d}
\int_{\mathbb{T}^{d}_{\nu}}\vec{u}\cdot\nabla\vec{u}dx=0
\qquad \Rightarrow\qquad \vec{c}(t)=\vec{c}(0)\,.
\]
The system \eqref{EK1} is Galilean invariant, i.e. if $(\rho(t,x), \vec{u}(t,x))$ 
solves \eqref{EK1} then also
\[
\rho_{\vec{c}}(t,x):=\rho(t,x+\vec{c}t)\,,
\qquad 
\vec{u}_{\vec{c}}(t,x):=\vec{u}(t,x+\vec{c}t)-\vec{c}\,,
\]
solves \eqref{EK1}. 
Then we can always assume that $\vec{u}=\nabla \phi$
for some scalar potential $\phi:\mathbb{T}^{d}_{\nu}\to \mathbb{R}$. The system \eqref{EK1}
reads
\begin{equation}\label{EK2}
\left\{\begin{aligned}
& \pa_{t}\rho+{\rm div}(\rho\nabla\phi)=0\\
&\pa_{t}\phi+\tfrac{1}{2}|\nabla\phi|^{2}+g(\rho)=K(\rho)\Delta\rho
+\tfrac{1}{2}K'(\rho)|\nabla\rho|^{2}\,.
\end{aligned}\right.
\end{equation}
Notice that the average
\begin{equation}\label{averRho}
\frac{1}{(2\pi)^{d}\nu_1\cdots\nu_d}\int_{\mathbb{T}^{d}_{\nu}}\rho(x)dx=\mathtt{m}\in \mathbb{R}\,,
\end{equation}
is a constant of motion of \eqref{EK2}. 
Notice also that the vector field  of \eqref{EK2}
depends only on $\Pi_0^{\perp}\phi$ (see \eqref{pizeroBot}).
In view of \eqref{averRho} we rewrite 
$\rho\rightsquigarrow\mathtt{m}+\rho$ where $\rho$ is a function
with zero average. 
Then,
the system \eqref{EK2}  (recall also \eqref{hypKK})  becomes \eqref{EK3}.\\

\smallskip
\noindent
{\bf Phase space and notation.}
In the paper we work with  functions 
belonging to the Sobolev space
\begin{equation}\label{spazioSob}
H^{s}(\mathbb{T}_{\nu}^d):=
\Big\{
u(x) = \frac{1}{(2\pi)^{d/2}}
\sum_{j \in \Z_{\nu}^{d} } u_{j} e^{\ii j\cdot x }\,\;:\;\;
\|u(\cdot)\|_{H^{s}(\mathbb{T}_{\nu}^{d})}^{2}:=\sum_{j\in \mathbb{Z}_{\nu}^{d}}\langle j\rangle^{2s}|u_{j}|^{2}<+\infty
\Big\}\,,
\end{equation}
where $\langle j \rangle:=\sqrt{1+|j|^{2}}$ for $j\in \mathbb{Z}_{\nu}^{d}$ with 
$\mathbb{Z}_{\nu}^{d}:=(\mathbb{Z}/\nu_1)\times\cdots\times(\mathbb{Z}/{\nu_d})$.
The natural phase space 
for \eqref{EK3} is 
$H_0^{s}(\mathbb{T}^{d}_{\nu})\times\dot{H}^{s}(\mathbb{T}^{d}_{\nu})$
where ${\dot H}^s (\T^{d}_{\nu}):= H^s (\T^{d}_{\nu}) \slash_{ \sim}$ 
is the homogeneous Sobolev space
obtained by the equivalence relation
$\psi_1 (x) \sim \psi_2 (x)$ if and only if $ \psi_1 (x) - \psi_2 (x) = c $ is a constant;   $H_0^{s}(\mathbb{T}^{d}_{\nu})$ is
the subspace of $H^{s}(\mathbb{T}^{d}_{\nu})$ of functions with zero average.
Despite this fact we prefer to work with a couple of variable 
$(\rho,\phi)\in H_0^s(\T_\nu^d)\times H^s(\T^d_\nu)$ 
but at the end we control only the norm 
$\| \Pi_0^{\bot} \phi \|_{H^{s}(\mathbb{T}_{\nu}^{d})}$ 
which in fact is the relevant quantity for \eqref{EK3}.  
To lighten the notation we shall write $\|\cdot\|_{H^{s}_{\nu}}$ to denote $\|\cdot\|_{H^{s}(\mathbb{T}_{\nu}^{d})}$.

In the following we will use the notation $A\lesssim B$ to denote 
$A\le C B$ where $C$ is a positive constant
depending on  parameters fixed once for all, for instance $d$
 and $s$.
 We will emphasize by writing $\lesssim_{q}$
 when the constant $C$ depends on some other parameter $q$.

\bigskip
\textbf{Ideas of the proof.}
The general  \eqref{EK1} is a system of \emph{quasi-linear} equations. The  case \eqref{EK3}, i.e. the system \eqref{EK1} with the particular choice \eqref{hypKK}, reduces, for \emph{small solutions}, to a \emph{semi-linear} equation, more precisely to a nonlinear Schr\"odinger equation. This is a consequence of the fact that the \emph{Madelung transform} (introduced for the first time in the seminal work by Madelung \cite{Madelung}) is well defined for small solutions. In other words  one  can introduce the new variable  $\psi:= \sqrt{\mathtt{m}+\rho} e^{{{\rm i} \phi}/{\hbar}}$ (see Section \ref{madmad} for details), where $\hbar=2\sqrt{k}$, obtaining the equation
\begin{equation*}
 \pa_{t}\psi=\ii \Big(\frac{\hbar}{2}\Delta\psi-\frac{1}{\hbar}g(|\psi|^{2})\psi\Big)\,.
\end{equation*}
Since $g(\mathtt{m})=0$, such an equation has an   equilibrium point at $\psi=\sqrt{\mathtt{m}}$. The study of the stability of small solutions for \eqref{EK3} is equivalent to the study of the stability of the variable $z=\psi-\sqrt{\mathtt{m}}$. The equation for the variable $z$ reads
\begin{equation*}
\pa_{t}z=-\ii \big(\frac{\hbar |D|_{\nu}^{2}}{2}+\frac{\mathtt{m}g'(\mathtt{m})}{\hbar}\big)z
-\ii \frac{\mathtt{m}g'(\mathtt{m})}{\hbar}\bar{z}+
f(z),
\end{equation*}
where $f$ is a smooth function having a zero of order 2 at $z=0$, i.e. $|f(z)|\lesssim |z|^2$, and $|D|_{\nu}^{2}$ is the Fourier multiplier with symbol
\begin{equation}\label{gatto25}
|\x|_{\nu}^{2}:=\sum_{i=1}^{d}a_i|\x_{j}|^{2}\,,\quad a_i:=\nu_i^{2}\,,\quad \forall\,\x\in \mathbb{Z}^{d}\,.
\end{equation}
The aim is to use a \emph{Birkhoff normal form/modified energy} technique in order to reduce the size of the nonlinearity $f(z)$. To do that, it is convenient to perform  some preliminary reductions. First of all we want to eliminate the addendum $-\ii \frac{\mathtt{m}g'(\mathtt{m})}{\hbar}\bar{z}$. In other words we want to
diagonalize the matrix 
\begin{equation}\label{linearizz}
\mathcal{L}=\begin{pmatrix} 
\frac{\hbar}{2} |D|_{\nu}^2+ \frac{1}{\hbar}\mathtt{m}g'(\mathtt{m})
& 
\frac{1}{\hbar}\mathtt{m}g'(\mathtt{m})\vspace{0.3em}\\ 
\frac{1}{\hbar}\mathtt{m}g'(\mathtt{m}) & 
 \frac{\hbar}{2} |D|_{\nu}^2+\frac{1}{\hbar}\mathtt{m}g'(\mathtt{m})
\end{pmatrix} \,.
\end{equation}
To achieve the diagonalization of this matrix it is necessary to rewrite the equation in a system of coordinates which does not involve the zero mode. We perform this reduction in Section \ref{eliminazero}: we use the \emph{gauge invariance} of the equation as well as the $L^2$ norm preservation to eliminate the dynamics of the zero mode. This  idea has been introduced for the first time in \cite{Faouplane}. After the diagonalization of the matrix in \eqref{linearizz} we end up with a diagonal, quadratic, semi-linear equation with dispersion law  
\begin{equation*}
\omega(j):=\sqrt{ \frac{\hbar^2}{4} |j|_{\nu}^4+ \mathtt{m}g'(\mathtt{m}) |j|_{\nu}^2}\,,
\end{equation*}
where $j$ is a vector in $\mathbb{Z}^d\setminus \{0\}$. At this point we are ready to define a suitable modified energy. Our primary aim is to control the derivative of the $H^s$-norm of the solution 
\begin{equation}\label{derivata}
\frac{d}{dt}\|\tilde{z}(t)\|_{H^{s}}^2\,,
\end{equation}
where $\tilde{z}$ is the variable of the diagonalized system, for the longest time possible. 
Using the equation, such a quantity may be rewritten as the sum of trilinear expressions in $\tilde{z}$.  
We  perturb the Sobolev energy by expressions homogeneous of degree at least $3$ 
such that their time derivatives cancel out the main contribution 
(i.e. the one coming from cubic terms) in \eqref{derivata}, up to remainders of higher order. 
In trying to do this small dividers appear, i.e. denominators of the form
\begin{equation*}
\pm\omega(j_1)\pm\omega(j_2)
\pm\omega(j_3)\,.
\end{equation*}
It is fundamental that the perturbations we define is bounded by some power of $\|\tilde{z}\|_{H^{s}}$, 
with the same $s$ in \eqref{derivata}, otherwise we obtain an estimate with 
\emph{loss of derivatives}. Therefore we need to impose some lower bounds on the small dividers. 
Here it enters in the game the irrationality of the torus $\nu$. 
We prove indeed that for almost any $\nu\in[1,2]^{d}$, there exists  $\gamma>0$ such that
\begin{equation*}
|\pm\omega(j_1)\pm\omega(j_2)
\pm\omega(j_3)|\geq
\frac{\gamma}{\mu_1^{d-1}\log^{\,d+1}{\big(1+\mu_1^2\big)}\mu_3^{M(d)}}\,,
\end{equation*}
if $\pm j_1\pm j_2\pm j_3=0$,
we denoted by $M(d)$  a positive constant depending on the dimension $d$ and $\mu_i$ 
the $i$-st largest integer among $|j_1|, |j_2|$ and $|j_3|$. 
It is nowadays well known, see for instance \cite{Bambu,BG}, 
that the power of $\mu_3$ is not dangerous if we work in $H^s$ with $s$ big enough. 
Unfortunately we have also a power of the highest frequency 
$\mu_1$ which represents, in principle, a \emph{loss of derivatives}. 
However, this loss of derivatives may be transformed in a loss of length of 
the lifespan through partition of frequencies, 
as done for instance in \cite{Delort-Tori,IPtori,FGI20,BFGI}. 

 \bigskip
\textbf{Some comments.} As already mentioned, an estimate on small divisors involving 
only powers of $\mu_3$ is not dangerous. 
We may obtain such an estimate when  the equation is considered 
on the squared torus $\mathbb{T}^d$, using as a parameter 
the \emph{mass} $\mathtt{m}$. 
In this case, indeed, one can obtain better estimates by following the proof in \cite{Faouplane}. 
This is a consequence of the fact that the set of differences of eigenvalues is \emph{discrete}.
This is not the case of irrational tori with fixed mass, where
the set of eigenvalues is \emph{not discrete}.
Having estimates involving only $\mu_3$ one could actually prove an $\emph{almost-global}$ stability. More precisely one can prove, for instance, that there exists a zero Lebesgue measure set $\mathcal{N}\subset [1,+\infty)$, such that if $\mathtt{m}$ is in $[1,+\infty)\setminus \mathcal{N}$, then for any $N\geq 1$ if the initial condition is sufficiently regular (w.r.t. $N$) and of size $\varepsilon$ sufficiently small (w.r.t. $N$) then the solution stays of size $\varepsilon$ for a time of order $\varepsilon^{-N}$. The proof follows the lines of classical papers such as \cite{Bambu, BG, BDGS} by using the Hamiltonian structure of the equation. More precisely,   the system 
\eqref{EK3} can be written in the form
\begin{equation}\label{EK2Ham}
\pa_{t}\vect{\rho}{\phi}=X_{H}(\rho,\phi)
=\left(
\begin{matrix}
\pa_{\phi}H(\rho,\phi)\\
-\pa_{\rho}H(\rho,\phi)
\end{matrix}
\right)\,,
\end{equation}
where $\pa$ denotes the $L^2$-gradient and 
$H(\rho,\phi)$ is the Hamiltonian function 
\begin{equation}\label{HamFunc}
H(\rho,\phi)=\frac{1}{2}\int_{\mathbb{T}_{\nu}^d}(\mathtt{m}+\rho)|\nabla\phi|^{2}dx+
\int_{\mathbb{T}_{\nu}^d}\Big(\frac{1}{2}\frac{\kappa}{\mathtt{m}+\rho}|\nabla\rho|^{2}+G(\mathtt{m}+\rho)\Big)dx
\end{equation}
where $G'(\rho)=g(\rho)$. \\
We  do not know if the solution of \eqref{EK3} are globally defined. There are positive answers in the case that the equation is posed on the Euclidean space $\mathbb{R}^d$ with $d\geq 3$, see for instance \cite{Audiard}. Here the dispersive character of the equation is taken into account. For recent developments in this direction see \cite{Antonelli} and reference therein. It is worth mentioning also the scattering result for the Gross-Pitaevsii equation \cite{naka}.
Since we are considering the equation on a compact manifold, the dispersive estimates are not available. \\
It would be interesting to obtain a long time stability result also for solutions of the general system \eqref{EK1}. In this case the equation may be not recasted as a \emph{semi-linear} Schrödinger equation. Being a quasi-linear system, we expect that a para-differential approach, in the spirit of \cite{IPtori,FGI20} should be applied. However, in this case, the quasi-linear term is quadratic, hence \emph{big}. In \cite{IPtori,FGI20} the quasi-linear term is smaller. Therefore new ideas have to be introduced in order to improve the local existence theorem.

By using para-compositions (in the spirit of \cite{BD,Feola-Iandoli-Long,Feola-Iandoli-Totale}), in the case $d=1$, i.e. on the torus $\mathbb{T}^1$, it is possible to obtain  stronger results. This is the argument of a future work of one of us  with other collaborators \cite{murgantefuturo}.

\bigskip



\section{From (\ref{EK3}) to Nonlinear Schr\"odinger}\label{madmad}

\subsection{Madelung transform}
For $\lambda\in \mathbb{R}_{+}$, 
we define the change of variable (\emph{Madelung transform})
\begin{equation*}\label{mad}\tag{$\mathcal{M}$}
 \psi:= \mathcal{M}_{\psi}(\rho,\phi):=\sqrt{\mathtt{m}+\rho} e^{{\rm i}\lambda \phi}\,,
 \quad \bar{\psi}:=\mathcal{M}_{ \bar\psi}(\rho,\phi):=\sqrt{\mathtt{m}+\rho}e^{-\ii\lambda\phi}\,.
  \end{equation*}
  Notice that the inverse map has the form
    \begin{equation}\label{madelunginv}
  \mathtt{m}+\rho=\mathcal{M}_\rho^{-1}(\psi,\bar\psi):=|\psi|^{2}\,,\qquad \phi=\mathcal{M}_\phi^{-1}(\psi,\bar\psi):=\frac{1}{\lambda}
  \arctan\left(\frac{-\ii(\psi-\bar{\psi})}{\psi+\bar{\psi}}\right)\,.
  \end{equation}
  In the following lemma we provide a well-posedness result for the Madelung transform.
  \begin{lemma}\label{federico1}
  Define
  \begin{equation}\label{alberobello}
  \frac{1}{4\lambda^{2}}=\kappa\,,\qquad \hbar:= \frac{1}{\lambda}=2\sqrt{\kappa}\,.
  \end{equation}
  The following holds.
  
  \noindent
  $(i)$ Let $s>1$ and 
  \[
  \delta:=\frac{1}{\mathtt{m}}\|\rho\|_{H_{\nu}^{s}}+\frac{1}{\sqrt{\kappa}}\|\Pi_0^{\bot}\phi\|_{H_{\nu}^{s}} 
  \quad \sigma:=\Pi_0\phi\,.
  \]
  There is $C=C(s)>1$ such that, if $C(s)\delta\leq1$, then the function 
  $\psi$ in \eqref{mad} satisfies
  \begin{equation}\label{stimamand}
  \|\psi-\sqrt{\mathtt{m}}e^{\ii\sigma}\|_{H_{\nu}^{s}}\leq 2\sqrt{\mathtt{m}}\delta\,.
  \end{equation}
  
  \noindent
  $(ii)$ Define
  \[
  \delta':=\inf_{\sigma \in \T}\|\psi-\sqrt{\mathtt{m}}e^{\ii \sigma}\|_{H_{\nu}^{s}}\,.
  \]
  There is $C'=C'(s)>1$ such that, if $C'(s) \delta'(\sqrt{\mathtt{m}})^{-1}\leq 1$, then the functions $\rho,$
\begin{equation}\label{stimamand2}
\frac{1}{\mathtt{m}}\|\rho\|_{H_{\nu}^{s}}
+\frac{1}{\sqrt{\kappa}}\|\Pi_0^{\bot}\phi\|_{H_{\nu}^{s}}\leq 8\frac{1}{\sqrt{\mathtt{m}}}\delta'\,.
\end{equation}

  \end{lemma}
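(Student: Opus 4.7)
Both parts reduce to two standard tools in $H^s(\T_\nu^d)$ with $s>d/2$: the algebra property $\|uv\|_{H^s_\nu}\lesssim \|u\|_{H^s_\nu}\|v\|_{H^s_\nu}$, and Moser-type tame composition estimates, namely $\|F(u)-F(0)\|_{H^s_\nu}\le C(\|u\|_{L^\infty})\|u\|_{H^s_\nu}$ for $F\in C^\infty$, together with the Sobolev embedding $H^s\hookrightarrow L^\infty$. The smallness hypotheses on $\delta$ (resp.\ $\delta'/\sqrt{\mathtt{m}}$) ensure that $\mathtt{m}+\rho$ stays positive and $\psi$ stays away from $0$, so the square root in \eqref{mad} and the $\arctan$ in \eqref{madelunginv} are evaluated on safe domains. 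I will also use systematically the relation $\lambda=1/(2\sqrt{\kappa})$ from \eqref{alberobello}.

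For part (i), I would split the discrepancy as
\begin{equation*}
\psi-\sqrt{\mathtt{m}}\,e^{\ii\sigma}=\underbrace{\big(\sqrt{\mathtt{m}+\rho}-\sqrt{\mathtt{m}}\big)\,e^{\ii\lambda\phi}}_{\mathrm{I}}+\underbrace{\sqrt{\mathtt{m}}\,e^{\ii\sigma}\big(e^{\ii\lambda\Pi_0^{\bot}\phi}-1\big)}_{\mathrm{II}},
\end{equation*}
thereby isolating the amplitude perturbation from the oscillatory one (here I use $\lambda\phi=\lambda\sigma+\lambda\Pi_0^\bot\phi$ and factor out $e^{\ii\lambda\sigma}$, absorbing the constant phase into the reference state). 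For $\mathrm{I}$, I apply the Moser estimate to $F(x)=\sqrt{1+x}-1$ composed with $\rho/\mathtt{m}$, getting $\|\sqrt{\mathtt{m}+\rho}-\sqrt{\mathtt{m}}\|_{H^s_\nu}\le \tfrac{\sqrt{\mathtt{m}}}{2}\,\mathtt{m}^{-1}\|\rho\|_{H^s_\nu}\big(1+O(\delta)\big)$; multiplication by the unit-modulus function $e^{\ii\lambda\phi}$ is then controlled by the algebra property and a composition estimate on $e^{\ii t}-1$ applied to $\lambda\Pi_0^\bot\phi$, whose $H^s$-norm is at most $\delta/2$ thanks to $\lambda=1/(2\sqrt{\kappa})$ and the definition of $\delta$. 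For $\mathrm{II}$, the same composition estimate on $e^{\ii t}-1$ yields $\|\mathrm{II}\|_{H^s_\nu}\le \sqrt{\mathtt{m}}\,(\delta/2)\big(1+O(\delta)\big)$. Summing and absorbing all $O(\delta)$-remainders by choosing $C(s)$ large enough so that $C(s)\delta\le 1$ gives the claimed bound $\|\psi-\sqrt{\mathtt{m}}e^{\ii\sigma}\|_{H^s_\nu}\le 2\sqrt{\mathtt{m}}\,\delta$.

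For part (ii), let $\sigma'\in\T$ attain (or nearly attain) the infimum in the definition of $\delta'$ and set $w:=\psi-\sqrt{\mathtt{m}}\,e^{\ii\sigma'}$, so that $\|w\|_{H^s_\nu}\le 2\delta'$ (or $=\delta'$ if the infimum is attained). Since $\mathtt{m}+\rho=|\psi|^2$ by \eqref{madelunginv}, a direct expansion gives
\begin{equation*}
\rho=|\psi|^2-\mathtt{m}=2\sqrt{\mathtt{m}}\,\Re\!\big(e^{-\ii\sigma'}w\big)+|w|^2,
\end{equation*}
and the algebra property yields $\mathtt{m}^{-1}\|\rho\|_{H^s_\nu}\le 2\delta'/\sqrt{\mathtt{m}}\,\big(1+O(\delta'/\sqrt{\mathtt{m}})\big)$. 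To control $\phi$, I would write $\psi=\sqrt{\mathtt{m}}\,e^{\ii\sigma'}(1+\tilde w)$ with $\tilde w:=e^{-\ii\sigma'}w/\sqrt{\mathtt{m}}$, small in $L^\infty$ by the smallness hypothesis, and observe that on such a neighbourhood one has $\psi/|\psi|=e^{\ii\sigma'}\exp\!\big(\ii\,\Im\log(1+\tilde w)\big)$, with $\log$ the principal branch; comparing with \eqref{madelunginv} gives $\lambda\phi\equiv\sigma'+\Im\log(1+\tilde w)$ modulo an additive constant. Projecting by $\Pi_0^\bot$ kills both the constants and $\sigma'$, and Moser applied to $\log(1+x)$ gives $\|\Im\log(1+\tilde w)\|_{H^s_\nu}\le\|\tilde w\|_{H^s_\nu}\big(1+O(\delta'/\sqrt{\mathtt{m}})\big)$. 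Using $\lambda^{-1}=2\sqrt{\kappa}$ this turns into $\kappa^{-1/2}\|\Pi_0^\bot\phi\|_{H^s_\nu}\le 2\delta'/\sqrt{\mathtt{m}}\,\big(1+O(\delta'/\sqrt{\mathtt{m}})\big)$. Summing the two bounds and choosing $C'(s)$ so large that $C'(s)\delta'/\sqrt{\mathtt{m}}\le 1$ absorbs all quadratic remainders into an overall factor $\le 8$, which is the claim.

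\textbf{Main obstacle.} The proof itself is Moser plus algebra, so there is no conceptual difficulty; the subtle point is the \emph{scale bookkeeping}, that is, tracking how the two parameters $\mathtt{m}$ and $\kappa$ (equivalently $\lambda$, $\hbar$) enter the estimates so that the dimensionless small parameters $\delta$ and $\delta'/\sqrt{\mathtt{m}}$ appear naturally. It is precisely the identity $\lambda=1/(2\sqrt{\kappa})$ which matches the normalisation of $\|\Pi_0^\bot\phi\|_{H^s_\nu}$ by $\sqrt{\kappa}$ and makes the constants $2$ in part (i) and $8$ in part (ii) come out without spurious factors depending on $\mathtt{m}$ or $\kappa$.
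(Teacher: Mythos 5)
Your proposal is correct and takes essentially the same route as the paper: tame Moser composition estimates plus the algebra property, the identical expansion of $\rho=|\psi|^{2}-\mathtt{m}$ around $\sqrt{\mathtt{m}}e^{\ii\sigma'}$, and the phase recovered by composing a smooth angle function with the phase-rotated variable and then killing constants with $\Pi_0^{\bot}$ (your $\Im\log(1+\tilde w)$ plays exactly the role of the paper's $\arctan$ formula from \eqref{madelunginv}). The only point worth flagging is that, as you implicitly do when you ``absorb the constant phase into the reference state,'' the reference phase in part $(i)$ must be read as $e^{\ii\lambda\sigma}$ (equivalently $\sigma=\lambda\Pi_0\phi$) so that the splitting $\lambda\phi=\lambda\sigma+\lambda\Pi_0^{\bot}\phi$ is consistent with the factor $e^{\ii\sigma}$ appearing in \eqref{stimamand}.
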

  \begin{proof}
  The bound \eqref{stimamand} follows by \eqref{mad} and classical estimates on composition operators 
  on Sobolev spaces (see for instance \cite{Mos66}).
  Let us check the \eqref{stimamand2}. 
  By the first of \eqref{madelunginv}, for any $\sigma \in \T$,  we have
  \begin{align}
  \| \rho\|_{H_{\nu}^s}  \leq&\|\sqrt{\mathtt{m}}(\psi e^{-\ii\sigma}-\sqrt{\mathtt{m}})\|_{H_{\nu}^s}
  +\| \sqrt{\mathtt{m}}(\ov{\psi}e^{\ii\sigma}-\sqrt{\mathtt{m}})\|_{H_{\nu}^s}
  +\|(\psi e^{-\ii\sigma}-\sqrt{\mathtt{m}})(\ov{\psi}e^{\ii\sigma}-\sqrt{\mathtt{m}})\|_{H_{\nu}^s}
  \\
  \leq& 
  \mathtt{m} \big( \frac{2}{\sqrt{\mathtt{m}}}\| \psi -\sqrt{\mathtt{m}}e^{\ii\sigma}\|_{H_{\nu}^s}
  +(\frac{1}{\sqrt{\mathtt{m}}}\| \psi -\sqrt{\mathtt{m}}e^{\ii\sigma}\|_{H_{\nu}^s})^2\big)\,.
  \end{align}
  Therefore, by the arbitrariness of $\sigma$ and using that $(\sqrt{\mathtt{m}})^{-1}\delta'\ll1$, one deduces
  \[
  \frac{1}{\mathtt{m}}\|\rho\|_{H_{\nu}^{s}}\leq 3\frac{1}{\sqrt{\mathtt{m}}}\delta'\,.
  \]
Moreover we note that
  \[
  \|(\psi-\bar{\psi})(\psi+\ov{\psi})^{-1}\|_{H_{\nu}^{s}}\leq 
  2\frac{1}{\sqrt{\mathtt{m}}}\| \psi- \sqrt{\mathtt{m}}\|_{H_{\nu}^{s}}\,.
  \]
  Then by the second in \eqref{madelunginv}, \eqref{alberobello},
  composition estimates on Sobolev spaces and the smallness
  condition $(\sqrt{\mathtt{m}})^{-1}\delta'\ll1$, one deduces, for any $\sigma \in \T$ 
  such that  $(\sqrt{\mathtt{m}})^{-1}\| \psi-\sqrt{\mathtt{m}}e^{\ii\sigma}\|_{H_{\nu}^s}\ll1$, that
  \begin{align*}
  \frac{1}{\sqrt{\kappa}}\|\Pi_0^{\bot}\phi\|_{H_{\nu}^{s}}&+
  2\| \Pi_0^{\bot}\arctan{ \big(\frac{-\ii (\psi- \ov{\psi})}{\psi+\ov{\psi}}\big)}\|_{H_{\nu}^s}
  \\
  &=2\| \Pi_0^{\bot}\arctan{ \big(\frac{-\ii (\psi e^{-\ii\sigma}- \ov{\psi}e^{\ii\sigma})}{\psi e^{-\ii\sigma}
  +\ov{\psi}e^{-\ii \sigma}}}\big)\|_{H_{\nu}^s}
  \\
  &\leq \frac{5}{\sqrt{\mathtt{m}}} \| \psi-\sqrt{\mathtt{m}} e^{\ii\sigma}\|_{H_{\nu}^s}\,.
  \end{align*}
  Therefore the \eqref{stimamand2} follows.
  \end{proof}
  We now rewrite equation \eqref{EK3} in the variable $(\psi,\bar{\psi})$.
  \begin{lemma}\label{gatto23}
  Let $(\rho,\phi)\in H^s_0(\T_\nu^d)\times H^s(\T_\nu^d)$ be a solution of \eqref{EK3} defined over a time interval $[0,T]$, $T>0$,
  such that
  \begin{equation}\label{hawaii20}
  \sup_{t\in[0,T)}\Big(\frac{1}{\mathtt{m}}\|\rho(t,\cdot)\|_{H_{\nu}^{s}} +\frac{1}{\sqrt{\kappa}}
\| \Pi_0^{\bot}\phi(t,\cdot)\|_{H_{\nu}^{s}}    \Big)\leq \eps
  \end{equation}
  for some $\e>0$ small enough. Then 
  the function $\psi$ defined in \eqref{mad} solves
  \begin{equation}\label{papsipsi4}
    \begin{cases}
 \pa_{t}\psi=-\ii \big(-\frac{\hbar}{2}\Delta\psi+\frac{1}{\hbar}g(|\psi|^{2})\psi\big)\\
 \psi(0)= \sqrt{\mathtt{m}+\rho(0)}e^{\ii \phi(0)}\,.
 \end{cases}
 \end{equation}
  \end{lemma}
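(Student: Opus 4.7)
\medskip

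\textbf{Proof plan.} This is essentially the classical Madelung computation, so the strategy is a direct differentiation and substitution. The smallness hypothesis \eqref{hawaii20} enters only to guarantee that $\mathtt{m}+\rho$ stays uniformly bounded away from zero on $[0,T]$ (via Sobolev embedding, since $s>d/2$), hence that $\sqrt{\mathtt{m}+\rho}$ and $1/(\mathtt{m}+\rho)$ are smooth functions of $\rho$ to which the standard composition estimates on $H^s_\nu$ apply. In particular $\psi=\sqrt{\mathtt{m}+\rho}\,e^{\ii\lambda\phi}$ is $H^s_\nu$ in $x$ and $C^1$ in $t$, so the computation below is rigorously justified.

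Set $R:=\mathtt{m}+\rho$, $S:=\phi$ and $\lambda=1/\hbar$. Differentiating in $t$ gives
\begin{equation*}
\pa_t\psi=\psi\left(\frac{\pa_t R}{2R}+\frac{\ii}{\hbar}\pa_t S\right),
\end{equation*}
while a direct chain-rule computation of $\Delta\psi$ yields the identity
\begin{equation*}
\frac{\Delta\psi}{\psi}=\frac{\Delta\sqrt{R}}{\sqrt{R}}+\frac{2\ii}{\hbar}\frac{\nabla\sqrt{R}\cdot\nabla S}{\sqrt{R}}+\frac{\ii}{\hbar}\Delta S-\frac{1}{\hbar^{2}}|\nabla S|^{2}.
\end{equation*}
Multiplying by $\ii\hbar/2$ and subtracting $\frac{\ii}{\hbar}g(R)$, the target equation \eqref{papsipsi4} becomes, after dividing by $\psi$, the two scalar identities obtained by matching real and imaginary parts.

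First I match the real part: it reads
\begin{equation*}
\frac{\pa_t\rho}{2R}=-\frac{\nabla\sqrt{R}\cdot\nabla S}{\sqrt{R}}-\frac{1}{2}\Delta S,
\end{equation*}
and multiplying by $2R$ and using $2\sqrt{R}\nabla\sqrt{R}=\nabla R=\nabla\rho$ this is exactly $\pa_t\rho=-\mathrm{div}(R\nabla\phi)=-\mathtt{m}\Delta\phi-\mathrm{div}(\rho\nabla\phi)$, i.e.\ the first line of \eqref{EK3}. The imaginary part reads
\begin{equation*}
\frac{\pa_t\phi}{\hbar}=\frac{\hbar}{2}\frac{\Delta\sqrt{R}}{\sqrt{R}}-\frac{1}{2\hbar}|\nabla\phi|^{2}-\frac{1}{\hbar}g(R).
\end{equation*}
Here the key algebraic point (the only place where one must be slightly careful) is the Madelung identity
\begin{equation*}
\frac{\hbar^{2}}{2}\frac{\Delta\sqrt{R}}{\sqrt{R}}=2\kappa\left(\frac{\Delta R}{2R}-\frac{|\nabla R|^{2}}{4R^{2}}\right)=\frac{\kappa}{\mathtt{m}+\rho}\Delta\rho-\frac{\kappa}{2(\mathtt{m}+\rho)^{2}}|\nabla\rho|^{2},
\end{equation*}
which follows from $\Delta\sqrt{R}=\frac{\Delta R}{2\sqrt{R}}-\frac{|\nabla R|^{2}}{4R\sqrt{R}}$ and the relation $\hbar^{2}=4\kappa$ in \eqref{alberobello}. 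Plugging this in reproduces exactly the second equation of \eqref{EK3}. Thus both components of \eqref{EK3} are equivalent to \eqref{papsipsi4} under $\mathcal{M}$, and the initial condition is immediate from the definition of $\psi$.

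The main (and only) conceptual obstacle is bookkeeping: ensuring that one correctly separates the real and imaginary contributions of $\Delta\psi/\psi$ and that the Korteweg tensor on the right of \eqref{EK3} matches the quantum pressure $\frac{\hbar^{2}}{2}\Delta\sqrt{R}/\sqrt{R}$; this forces the relation $\hbar=2\sqrt{\kappa}$ fixed in \eqref{alberobello}. No smallness of $\varepsilon$ beyond positivity of $\mathtt{m}+\rho$ is used; its role is purely to make the Madelung map a well-defined diffeomorphism between the relevant Sobolev neighbourhoods, as already recorded in Lemma \ref{federico1}.
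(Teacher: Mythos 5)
Your proposal is correct and is essentially the paper's own argument: a direct chain-rule computation of $\partial_t\psi$ and $\Delta\psi$ for $\psi=\sqrt{\mathtt{m}+\rho}\,e^{\ii\lambda\phi}$, with the smallness assumption used only to keep $\mathtt{m}+\rho$ bounded away from zero so that the Madelung map (and division by $\psi$) is legitimate, and with $\hbar^2=4\kappa$ making the Korteweg term match the quantum pressure. The only cosmetic difference is that you divide by $\psi$ and match real and imaginary parts against the two equations of \eqref{EK3}, whereas the paper substitutes the \eqref{EK3} equations into $\partial_t\psi$ and compares the result with $\ii\Delta\psi$ directly; the computations are identical.
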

    \begin{remark}
   We remark that the assumption of Lemma \ref{gatto23} 
   can be verified  in the same spirit of the local
well-posedness results in \cite{Feola-Iandoli-local-tori} and \cite{BMM1}. 
 \end{remark}
 
  \begin{proof}[Proof of Lemma \ref{gatto23}]
  The smallness condition \eqref{hawaii20} implies that the function $\psi$ is well-defined and satisfies 
  a bound like \eqref{stimamand}.
We first note
 \begin{align}
 &\nabla \psi = 
 ( \frac{1}{2\sqrt{\mathtt{m}+\rho}}\nabla \rho
 + {\rm i}\lambda  \sqrt{\mathtt{m}+\rho} \nabla \phi ) e^{{\rm i}\lambda\phi}\,, \label{winston}
 \\&
 \frac{1}{2\lambda^2}| \nabla \psi |^2= \frac12 \frac{1}{4\lambda^2} 
 \frac{1}{\mathtt{m}+\rho} |\nabla \rho|^2+\frac12 (\mathtt{m}+\rho) |\nabla \phi|^2\,.
 \end{align}
 Moreover, using \eqref{EK3}, \eqref{alberobello}, \eqref{mad}
 and that
 \[
 {\rm div}(\rho\nabla\phi)=\nabla\rho \cdot\nabla\phi+\rho\Delta\phi\,,
 \]
 we get
 \begin{equation}\label{papsipsi}
 \begin{aligned}
 \pa_{t}\psi&=\ii e^{\ii\lambda\phi}\Big(\frac{\Delta\rho}{4\lambda\sqrt{\mathtt{m+\rho}}}-\frac{|\nabla\rho|^{2}}{8\lambda(\mathtt{m}+\rho)^{\frac{3}{2}}}
 +\frac{\ii\sqrt{\mathtt{m}+\rho}\Delta\phi}{2}
 -\frac{\sqrt{\mathtt{m+\rho}}\lambda|\nabla\phi|^{2}}{2}+\frac{\ii \nabla\rho\cdot\nabla\phi}{2\sqrt{\mathtt{m}+\rho}}
 \Big)\\&-\ii\lambda g(|\psi|^{2})\psi\,.
 \end{aligned}
 \end{equation}
 On the other hand, by \eqref{winston}, we have
 \begin{equation}\label{papsipsi2}
 \ii\Delta\psi=\ii e^{\lambda\phi}\Big(
 \frac{\Delta\rho}{2\sqrt{\mathtt{m}+\rho}}-\frac{|\nabla\rho|^{2}}{4(\mathtt{m}+\rho)^{\frac{3}{2}}}
 +\ii\lambda \sqrt{\mathtt{m}+\rho}\Delta\phi-\lambda^{2}\sqrt{\mathtt{m}+\rho}|\nabla\phi|^{2}
 +\frac{\ii\lambda\nabla\rho\cdot\nabla\phi}{\sqrt{\mathtt{m}+\rho}}
 \Big)\,.
 \end{equation}
 Therefore, by \eqref{papsipsi}, \eqref{papsipsi2}, we deduce
 \begin{equation}\label{papsipsi3}
 \pa_{t}\psi=\frac{\ii}{2\lambda}\Delta\psi-\ii\lambda g(|\psi|^{2})\psi\,,
 \quad {\rm where}\quad \frac{1}{\lambda}=\hbar\,,
 \end{equation}
 which is the \eqref{papsipsi4}.
 \end{proof}
 Notice that the \eqref{papsipsi4} is an Hamiltonian equation of the form
  \begin{equation}\label{papsipsi5}
 \pa_{t}\psi=-\ii \pa_{\bar \psi}\mathcal{H}(\psi,\bar{\psi})\,,\qquad
  \mathcal{H}(\psi,\bar{\psi})=
 \int_{\mathbb{T}^{d}_{\nu}}
 \big(\frac{\hbar}{2}|\nabla\psi|^{2}+\frac{1}{\hbar}G(|\psi|^{2})\big)dx\,,
 \end{equation}
 where $\pa_{\bar{\psi}}=(\pa_{\Re \psi}+\ii \pa_{\Im \psi})/2$.
  The Poisson bracket is
   \begin{equation}\label{Poisson}
\{\mathcal{H}, \mathcal{G}\}:=-
\ii \int_{\mathbb{T}^{d}_{\nu}} \pa_{\psi}\mathcal{H}\pa_{\bar{\psi}}\mathcal{G}
-  \pa_{\bar{\psi}}\mathcal{H}\pa_{{\psi}}\mathcal{G} dx\,.
\end{equation}

\subsection{Elimination of the zero mode}\label{eliminazero} 
In the following it would be convenient 
to rescale the space variables 
$x\in \mathbb{T}^{d}_{\nu} \rightsquigarrow \nu\cdot x$ with 
$x\in \mathbb{T}^{d}$
and work with  functions 
belonging to the Sobolev space
$H^{s}(\mathbb{T}^d):=H^{s}(\mathbb{T}_{1}^d)$, i.e. the Sobolev space
in \eqref{spazioSob} with $\nu=(1,\ldots,1)$.
By  using the notation 
$ \psi= {(2\pi)^{-\frac{d}{2}}}\sum_{j\in \Z^d} \psi_j e^{\ii j\cdot x},$
 we introduce the set of variables 
\be \label{faou}
\begin{cases}
 \psi_0= \alpha e^{-\ii \theta} & \alpha \in [0,+\infty)\,,\, \theta \in \T \\
 \psi_j=  z_j e^{-\ii \theta} & j\not=0\,,
\end{cases}
\ee 
which are the polar coordinates for $j=0$ and a phase translation for $j\not=0$. 
Rewriting \eqref{papsipsi5} in Fourier coordinates one has 
\be 
\ii\pa_t \psi_j = \pa_{\bar{\psi_j}}\mathcal{H}
(\psi,\bar \psi)\,, \quad j\in \Z^d\,,
\ee 
where $\mathcal{H}$ is defined in \eqref{papsipsi5}. We define also the zero mean variable 
\begin{equation}\label{zeta}
z:= {(2\pi)^{-\frac{d}{2}}}\sum_{j\in \Z^d\setminus\{ 0\} } z_j e^{\ii j\cdot x}\,.
\end{equation}
By \eqref{faou} and \eqref{zeta} one has 
\be\label{faouinv}
\psi= ( \alpha + z) e^{\ii\theta}\,,
\ee
and it is easy to prove that the quantity
\[
 \mathtt{m}:= \sum_{j\in \Z^d} |  \psi_j|^2= \alpha^2 +  \sum_{j\not=0} | z_j|^2
\]
 is a constant of motion for \eqref{papsipsi4}. Using \eqref{faou}, 
 one can completely recover the real variable $\alpha$ 
 in terms of $\{ z_j\}_{j\in \Z^d \setminus \{0\}}$ as 
\begin{equation}\label{def:alpha}
\alpha= \sqrt{ \mathtt{m}- \sum_{j\not=0} |  z_j|^2}\,.
\end{equation}
Note also  that the $(\rho,\phi)$ variables in \eqref{madelunginv} 
do not depend on the angular variable $\theta$ 
defined above. This implies that system \eqref{EK3} is 
completely described by the complex variable $z$.
On the other hand, using 
\[ 
\pa_{\bar{\psi_j}}\mathcal{H}(\psi e^{\ii \theta},\bar {\psi e^{\ii \theta}})
= 
\pa_{\bar{\psi_j}}\mathcal{H}(\psi,\bar \psi)e^{\ii \theta}\,,
\] 
one obtains
\be \label{equationnew}
\begin{cases}
\ii \pa_t \alpha+\pa_t\theta \alpha = \Pi_0\left( g( |\alpha+z|^2) (\alpha + z) \right)  
\\
\ii \pa_t  z_j + \pa_t\theta  z_j=   \frac{\pa \mathcal{H}}{\pa \bar \psi_j}(\alpha+ z ,\alpha +\bar z)\,.
\end{cases}
\ee 
Taking the real part of the first equation in \eqref{equationnew} we obtain 
\be \label{deteta}
 \pa_t\theta= \frac{1}{\alpha}  
 \Pi_0\left( \frac{1}{\hbar}g( |\alpha+z|^2)\Re  (\alpha + z) \right) 
 = \frac{1}{2\alpha} 
 \pa_{\bar{\alpha}}\mathcal{H}(\alpha, z, \bar z)\,,
\ee
where  (recall \eqref{gatto25})
\be 
\tilde{\mathcal{H}}(\alpha, z, \bar z):= 
\frac{\hbar}{2}\int_{\T^d} |D|_{\nu}^{2}z\cdot\bar{z}{\rm d}x
+ \frac{1}{\hbar}\int_{\T^d} G(| \alpha+z|^2)\, {\rm d}x\,.
\ee 
By \eqref{deteta}, \eqref{equationnew} and using that  
\[ 
\pa_{\bar{\psi_j}}\mathcal{H}(\alpha+ z ,\alpha +\bar z)
=
\pa_{\bar{z_{j}}}\tilde{\mathcal{H}}(\alpha, z, \bar z)\,,
\] 
one obtains 
\be \label{fresca}
\begin{aligned}
\ii \pa_t  z_j =  & 
\pa_{\bar{z_j}}\tilde{\mathcal{H}}(\alpha, z, \bar z)
- \frac{z_j}{2\alpha}
\pa_{\alpha}\tilde{\mathcal{H}}(\alpha, z, \bar z)=
 \pa_{\bar{z_j}}\mathcal{K}_{\mathtt{m}}( z, \bar z)\,, \quad j\not=0\,, 
\end{aligned}
\ee
where 
\[
\mathcal{K}_{\mathtt{m}}(z,\bar z):= 
\tilde{\mathcal{H}}(\alpha,z,\bar z)_{|\alpha=\sqrt{\mathtt{m}- \sum_{j\not=0} | z_j|^2}}\,.
\]
We resume the above discussion in the following lemma.
\begin{lemma}\label{federico2}
The following holds. 

 $(i)$ Let $s>\frac{d}{2}$ and 
  \[
  \delta:=\frac{1}{\mathtt{m}}\|\rho\|_{H^{s}}+\frac{1}{\sqrt{\kappa}}\|\Pi_0^{\bot}\phi\|_{H^{s}} \,,
  \quad \theta:=\Pi_0\phi\,.
  \]
  There is $C=C(s)>1$ such that, if $C(s)\delta\leq1$, then the function $z$ in \eqref{zeta} satisfies
  \begin{equation}\label{stimamandzeta}
  \|z\|_{H^{s}}\leq 2\sqrt{\mathtt{m}}\delta\,.
  \end{equation}
  
  \noindent
  $(ii)$ Define
  \[
  \delta':=\|z\|_{H^{s}}\,.
  \]
  There is $C'=C'(s)>1$ such that, if $C'(s) \delta'(\sqrt{\mathtt{m}})^{-1}\leq 1$, then the functions $\rho,$
\begin{equation}\label{stimamand2zeta}
\frac{1}{\mathtt{m}}\|\rho\|_{H^{s}}+\frac{1}{\sqrt{\kappa}}\|\Pi_0^{\bot}\phi\|_{H^{s}}
\leq 16\frac{1}{\sqrt{\mathtt{m}}}\delta'\,.
\end{equation}
 
 \noindent $(iii)$ Let $(\rho,\phi)\in H^s_0(\T_\nu^d)\times H^s(\T_\nu^d)$ 
 be a solution of \eqref{EK3} defined over a time interval $[0,T]$, $T>0$,
  such that
  \begin{equation*}
  \sup_{t\in[0,T)}\Big(\frac{1}{\mathtt{m}}\|\rho(t,\cdot)\|_{H^{s}} +\frac{1}{\sqrt{\kappa}}
\| \Pi_0^{\bot}\phi(t,\cdot)\|_{H^{s}}    \Big)\leq \eps
  \end{equation*}
  for some $\e>0$ small enough. Then the  function $z\in H^s_0(\T^d_\nu)$ 
  defined in \eqref{zeta} solves \eqref{fresca}.

\end{lemma}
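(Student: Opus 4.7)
The plan is to reduce each of the three statements of Lemma~\ref{federico2} to a previously-stated result (Lemma~\ref{federico1} or Lemma~\ref{gatto23}), by unpacking the polar/phase decomposition \eqref{faou}--\eqref{faouinv}. After the rescaling $x \rightsquigarrow \nu\cdot x$ introduced at the start of Section~\ref{eliminazero}, the Sobolev norms on $\T^d$ and $\T^d_\nu$ are equivalent up to $\nu$-dependent constants that are uniform on $[1,2]^d$, so the estimates of Lemma~\ref{federico1} transfer directly. The key observation is that $z = \Pi_0^\perp(\psi e^{-\ii\theta})$ from \eqref{faouinv}, so its $H^s$-norm is controlled by (and controls) the distance of $\psi$ from the circle $\{\sqrt{\mathtt{m}}e^{\ii\sigma} : \sigma \in \T\}$, up to the scalar correction $\alpha - \sqrt{\mathtt{m}}$.

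For part $(i)$, I would specialize Lemma~\ref{federico1}(i) to $\sigma = \theta := \Pi_0\phi$ to obtain $\|\psi - \sqrt{\mathtt{m}}e^{\ii\theta}\|_{H^s} \leq 2\sqrt{\mathtt{m}}\delta$. From \eqref{faouinv} one reads $\psi e^{-\ii\theta} - \sqrt{\mathtt{m}} = (\alpha - \sqrt{\mathtt{m}}) + z$, and since $\Pi_0^\perp$ kills constants and contracts the $H^s$-norm while multiplication by the unimodular constant $e^{-\ii\theta}$ is an isometry on $H^s$, one deduces $\|z\|_{H^s} \leq \|\psi e^{-\ii\theta} - \sqrt{\mathtt{m}}\|_{H^s} = \|\psi - \sqrt{\mathtt{m}}e^{\ii\theta}\|_{H^s} \leq 2\sqrt{\mathtt{m}}\delta$, proving \eqref{stimamandzeta}.

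For part $(ii)$, given $\|z\|_{H^s} = \delta'$ small, I would define $\alpha$ via \eqref{def:alpha}, which is legitimate since $\sum_{j\neq 0}|z_j|^2 \leq \|z\|_{L^2}^2 \leq \delta'^{\,2} \ll \mathtt{m}$, and pick any $\theta \in \T$, setting $\psi := (\alpha + z) e^{\ii\theta}$. A first-order Taylor expansion of the square root in \eqref{def:alpha} yields $|\alpha - \sqrt{\mathtt{m}}| \lesssim \delta'^{\,2}/\sqrt{\mathtt{m}} \leq \delta'$ under the smallness assumption. Testing the infimum $\delta'_{F_1} := \inf_\sigma \|\psi - \sqrt{\mathtt{m}}e^{\ii\sigma}\|_{H^s}$ with the choice $\sigma = \theta$ then bounds it by $\|z\|_{H^s} + C_d|\alpha - \sqrt{\mathtt{m}}| \leq 2\delta'$, and feeding this into Lemma~\ref{federico1}(ii) gives \eqref{stimamand2zeta} with the constant $16 = 2 \cdot 8$.

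Part $(iii)$ is essentially a collation of the computation already carried out in the paragraphs preceding the statement. I would invoke Lemma~\ref{gatto23} to conclude that $\psi = \sqrt{\mathtt{m}+\rho}\,e^{\ii\phi}$ solves \eqref{papsipsi4}, then substitute the ansatz \eqref{faouinv} and use the gauge invariance $\partial_{\bar\psi_j}\mathcal{H}(\psi e^{\ii\theta},\overline{\psi e^{\ii\theta}}) = \partial_{\bar\psi_j}\mathcal{H}(\psi,\bar\psi)e^{\ii\theta}$ to derive system \eqref{equationnew}. Taking the real part of the $j=0$ equation and dividing by $\alpha$ produces the explicit formula \eqref{deteta} for $\partial_t\theta$; $L^2$-conservation then eliminates $\alpha$ via \eqref{def:alpha}, leaving the reduced Hamiltonian $\mathcal{K}_{\mathtt{m}}$ and equation \eqref{fresca}. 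Since each step is either algebraic or a direct invocation of an earlier lemma, I do not expect any real obstacle; the only care needed is tracking constants in $(ii)$ and verifying that $\alpha$ remains real and bounded away from zero throughout, both of which follow from the qualitative smallness hypothesis.
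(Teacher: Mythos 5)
Your proposal is correct and follows essentially the same route as the paper: part $(i)$ via $\|z\|_{H^{s}}=\|\Pi_0^{\bot}\psi\|_{H^{s}}\leq\|\psi-\sqrt{\mathtt{m}}e^{\ii\theta}\|_{H^{s}}$ and Lemma \ref{federico1}$(i)$, part $(ii)$ by testing the infimum with $\sigma=\theta$, bounding $|\alpha-\sqrt{\mathtt{m}}|$ through \eqref{def:alpha} and invoking Lemma \ref{federico1}$(ii)$ (which is exactly where the factor $16=2\cdot 8$ comes from), and part $(iii)$ by collating \eqref{equationnew} and \eqref{deteta}. The only (harmless) looseness, shared with the paper itself, is the identification of the phase $\theta$ of \eqref{faou} with $\Pi_0\phi$; the estimate in $(i)$ only needs $\Pi_0^{\perp}$ to annihilate an arbitrary constant, so nothing is affected.
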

\begin{proof}
We note that 
\be 
\| z\|_{H^s}= \| \Pi_0^{\bot} \psi\|_{H^s}\leq \|\psi - \sqrt{m}e^{\ii \theta}\|_{H_s} \stackrel{\eqref{stimamand}}\leq2\sqrt{\mathtt{m}}\delta\,,
\ee 
which proves \eqref{stimamandzeta}. 
In order to prove \eqref{stimamand2zeta} we note that 
\begin{equation*}
\begin{aligned}
\inf_{\sigma\in \T} \| \psi-\sqrt{\mathtt{m}}e^{\ii\sigma}\|_{H^s}&\leq 
\| \psi- \sqrt{\mathtt{m}} e^{\ii \theta}\|_{H^s}
=\| \alpha-\sqrt{\mathtt{m}}+ z\|_{H^s} 
\\
&\leq \sqrt{\mathtt{m}- \| z\|_{L^2}^2}-\sqrt{\mathtt{m}}+ \| z\|_{H^s}\leq 2\delta'\,,
\end{aligned}
\end{equation*}
so that the  \eqref{stimamand2zeta} follows by \eqref{stimamand2}. 
The point $(iii)$ follows by \eqref{equationnew} and \eqref{deteta}.
\end{proof}
\begin{remark}
Using \eqref{madelunginv} and \eqref{faouinv} one can study the system \eqref{EK3} 
near the equilibrium point $(\rho,\phi)=(0,0)$ by studying the complex hamiltonian system 
\be\label{zetaequation}
\ii \pa_t z = \pa_{\bar{z}}\mathcal{K}_{\mathtt{m}}(z,\bar{z})
\ee 
near the equilibrium $z=0$. Note also that the natural phase-space for \eqref{zetaequation} 
is the complex Sobolev space $H_0^s(\T^d; \C )$, $s\in \R$, 
of complex Sobolev functions with zero mean.
\end{remark}

\subsection{Taylor expansion of the Hamiltonian} 
In order to study the stability of $z=0$ for \eqref{zetaequation} it is useful to 
expand $\mathcal{K}_{\mathtt{m}}$ at $z=0$. 
We have
\be\label{espansione}
\begin{aligned}
\mathcal{K}_{\mathtt{m}}(z,\bar z)&= 
\frac{\hbar}{2} \int_{\T^d} |D|_{\nu}^{2}z\cdot\bar{z}\, {\rm d} x 
+ \frac{1}{\hbar} \int_{\T^d} G\Big(\big| \sqrt{\mathtt{m}- \sum_{j\not=0} | z_j|^2}+ z\big|^2\Big)\, {\rm d}x 
\\&
=(2\pi)^d \frac{G(\mathtt{m})}{\hbar}
+\mathcal{K}_{\mathtt{m}}^{(2)}(z,\bar z)
+ \sum_{r= 3}^{N-1} \mathcal{K}_{\mathtt{m}}^{(r)}(z,\bar z)+R^{(N)}(z,\bar z)\,,
\end{aligned}
\ee
where
\be 
\mathcal{K}_{\mathtt{m}}^{(2)}(z,\bar z)=
\frac12 \int_{\T^d} \frac{\hbar}{2}|D|_{\nu}^{2} z\cdot\bar{z}\, {\rm d} x
+ \frac{g'(\mathtt{m})\mathtt{m}}{\hbar}\int_{\T^d} \frac12( z+\bar z)^2\, {\rm d}x\,,
\ee 
for any $r=3, \cdots, N-1$, 
$\mathcal{K}_{{\rm m}}^{(r)}(z,\bar z)$ 
 is an homogeneous multilinear Hamiltonian function of degree $r$ of the form 
 \begin{equation*}
 \mathcal{K}_{\mathtt{m}}^{(r)}(z,\bar z)=
 \sum_{\substack{\sigma\in\{-1,1\}^r,\ j\in(\Z^d\setminus\{0\})^r
 \\ 
\sum_{i=1}^r\sigma_i j_i=0}}
(\mathcal{K}_{\mathtt{m}}^{(r)})_{\sigma,j}
z_{j_1}^{\sigma_1}\cdots z_{j_r}^{\sigma_r}\,,
\qquad 
|(\mathcal{K}_{\mathtt{m}}^{(r)})_{\sigma,j}|\lesssim_{r}1\,,
\end{equation*}
and
\begin{equation}\label{R8}
\| X_{R^{(N)}}(z)\|_{H^{s}}\lesssim_s \| z\|_{H^s}^{r-1}\,,
\qquad
\forall\, z\in B_{1}(
H_0^{s}(\mathbb{T}^{d};\mathbb{C})) \,.
\end{equation}
The vector field of the Hamiltonian in \eqref{espansione} has the form
(recall \eqref{EK2Ham})
\be \label{linear}
\begin{aligned}
\pa_t\begin{bmatrix} z \\ \bar z\end{bmatrix}= 
\begin{bmatrix}-\ii \pa_{\bar{z}}  \mathcal{K}_{\mathtt{m}}\\ 
\ii\pa_{ z} \mathcal{K}_{\mathtt{m}}\end{bmatrix}
= 
&-\ii \begin{pmatrix} \frac{\hbar| D|_{\nu}^2}{2} 
+ \frac{\mathtt{m} g'(\mathtt{m})}{\hbar}&  
\frac{\mathtt{m}g'(\mathtt{m})}{\hbar}\\ 
-\frac{\mathtt{m} g'(\mathtt{m})}{\hbar}
 &-\frac{\hbar| D|_{\nu}^2}{2} 
 - \frac{\mathtt{m} g'(\mathtt{m})}{\hbar}\end{pmatrix}
 \begin{bmatrix} z \\ \bar z \end{bmatrix}
 \\&+
 \sum_{r=3}^{N-1}\begin{bmatrix} -\ii\pa_{\bar{z}}  \mathcal{K}_{\mathtt{m}}^{(r)}\\ 
\ii \pa_{ z} \mathcal{K}_{\mathtt{m}}^{(r)}\end{bmatrix}+
\begin{bmatrix} -\ii\pa_{\bar{z}}  R^{(N)}\\ 
\ii\pa_{ z} R^{(N)}\end{bmatrix}\,.
\end{aligned}
 \ee
 Let us now introduce the 
  $2\times2$ matrix of operators
 \[
 \mathcal{C}:=\frac{1}{\sqrt{2\omega(D)
A(D,\mathtt{m})}}
 \left(
 \begin{matrix}
A(D,\mathtt{m}) & 
 -\tfrac{1}{2}\mathtt{m}g'(\mathtt{m})
 \\
 -\tfrac{1}{2}\mathtt{m}g'(\mathtt{m}) & A(D,\mathtt{m}) \end{matrix}
 \right)\,,
 \]
 with 
  \[
A(D,\mathtt{m}):= \omega(D)
 +\tfrac{\hbar}{2}|D|_{\nu}^{2}+\tfrac{1}{2}\mathtt{m}g'(\mathtt{m})\,,
 \]
 and where $\omega(D)$ is the
 Fourier multiplier  with symbol 
 \begin{equation}\label{simboOmega}
 \omega(j):=\sqrt{ \frac{\hbar^2}{4} |j|_{\nu}^4+ \mathtt{m}g'(\mathtt{m}) |j|_{\nu}^2}\,.
 \end{equation}
 Notice that, by using \eqref{elliptic}, the matrix $\mathcal{C}$ is bounded, invertible and symplectic, with estimates
 \begin{equation}\label{stimeC}
 \|\mathcal{C}^{\pm1}\|_{\mathcal{L}{(H^s_0\times H^s_0,\,\, H^s_0\times H^s_0)}}\leq1+\sqrt{k}\beta,\quad \beta:=\frac{\mathtt{m}g'(\mathtt{m})}{k}.
 \end{equation}
 Consider the change of variables 
 \begin{equation}\label{def:WWW}
\begin{bmatrix} w \\ \bar w \end{bmatrix} := 
\mathcal{C}^{-1} \begin{bmatrix} z \\ \bar z \end{bmatrix}\,.
\end{equation}
then the Hamiltonian \eqref{espansione} reads
\begin{equation}\label{HamKK}
\begin{aligned}
&\widetilde{\mathcal{K}}_{\mathtt{m}}(w,\bar{w}):=
\widetilde{\mathcal{K}}^{(2)}_{\mathtt{m}}(w,\bar{w})
+\widetilde{\mathcal{K}}_{\mathtt{m}}^{(3)}(w,\bar{w})+
\widetilde{\mathcal{K}}^{(\geq4)}_{\mathtt{m}}(w,\bar{w})\,,
\\&
\widetilde{\mathcal{K}}^{(2)}_{\mathtt{m}}(w,\bar{w}):=
\mathcal{K}^{(2)}_{\mathtt{m}}\Big(\mathcal{C}
\begin{bmatrix} w \\ \bar w \end{bmatrix}\Big):=\frac{1}{2}
\int_{\mathbb{T}^{d}}\omega(D)z\cdot\bar{z}{\rm d}x\,,
\\&
\widetilde{\mathcal{K}}^{(3)}_{\mathtt{m}}(w,\bar{w}):=
\mathcal{K}^{(3)}_{\mathtt{m}}\Big(\mathcal{C}
\begin{bmatrix} w \\ \bar w \end{bmatrix}\Big)\,,
\qquad 
\widetilde{\mathcal{K}}_{\mathtt{m}}^{(\geq 4)} (w,\bar w):= 
\sum_{r=4}^{N-1} \mathcal{K}_{\mathtt{m}}^{(r)}\Big(\mathcal{C}
\begin{bmatrix} w \\ \bar w \end{bmatrix}\Big)
+ R^{(N)}\Big(\mathcal{C}
\begin{bmatrix} w \\ \bar w \end{bmatrix}\Big)\,.
\end{aligned}
\end{equation}
Therefore system \eqref{linear} becomes
\begin{equation}\label{linear22}
\pa_{t}w=-\ii \omega(D)w-\ii \pa_{\bar{w}}\widetilde{\mathcal{K}}^{(3)}_{\mathtt{m}}(w,\bar{w})
-\ii \pa_{\bar{w}}\widetilde{\mathcal{K}}^{(\geq4)}_{\mathtt{m}}(w,\bar{w})\,.
\end{equation}

\section{Small divisors}\label{sec:meas}
As explained in the introduction  we shall study the long time behaviour of 
solutions of \eqref{linear22} by means of Birkhoff normal form approach.
Therefore we have to provide suitable non resonance conditions 
among linear frequencies of oscillations $\omega(j)$ in \eqref{simboOmega}.
This is actually the aim of this section.
 
Let $\ta=(\ta_1,\ldots,\ta_d)=(\nu_1^2,\ldots,\nu_d^2) \in (1,4)^d$, $d=2,3$. 
If $j\in\Z^d\setminus\{0\}$ we define
\begin{equation}
|j|^2_{\ta}=\sum_{k=1}^d\ta_k j_{k}^2\,.
\end{equation}
We consider the dispersion relation 
\begin{equation}\label{disp-rel}
\omega(j):=\sqrt{k|j|^4_{\ta}+\mathtt{m}g'(\mathtt{m})|j|^2_{\ta}}\,,
\end{equation}
we note that 
$\omega(j)=\sqrt{k}\big(|j|_{\ta}^2+\frac{\beta}{2}-\frac{\beta^2}{8}\frac{1}{|j|_{\ta}^2}
+O(\frac{\beta^3}{|j|_{\ta}^4})\big)$ 
for any $j$ big enough with respect to $\beta:=\frac{\mathtt{m}g'(\mathtt{m})}{k}$.\\

Throughout this section we assume, without loss of generality, 
$|j_1|_{\ta}\geq |j_2|_{\ta}\geq |j_3|_{\ta}>0$, 
for any $j_i$ in $\Z^d$, moreover,
in order to lighten the notation,  
we  adopt the convention $\omega_i:=\omega(j_i)$ for any $i=1,2,3$.
The main result is the following.
\begin{proposition}{\bf (Measure estimates).}\label{measures}
There exists a full Lebesgue measure set $\mathfrak{A}\subset(1,4)^d$ 
such that for any $\ta\in\mathfrak{A}$ there exists $\gamma>0$ 
such that the following  holds true. 
If $\sigma_1j_1+\sigma_2j_2+\sigma_3j_3=0$, $\sigma_i\in\{\pm 1\}$ 
we have the estimate
\begin{equation}\label{misuratot}k^{-\frac12}\big|\sigma_3\omega_{3}+\sigma_2\omega_{2}+\sigma_1\omega_{1}\big|\gtrsim_d 
\begin{cases}
&\frac{\gamma}{|j_1|^{d-1}\log^{\,d+1}{(1+|j_1|^2)}|j_3|^{M(d)}}\,, \quad \mbox{\,if\,} \sigma_{1}\sigma_2=-1\\
& 1,\, \quad\quad\quad\qquad\mbox{if\,} \sigma_{1}\sigma_2=1
\end{cases}.
\end{equation}
for any $|j_1|_{\ta}\geq|j_{2}|_{\ta}\geq|j_3|_{\ta}$, $j_i\in\Z^d$ and where 
$M(d)$ is a constant depending only on $d$.
\end{proposition}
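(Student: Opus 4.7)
\emph{Trivial sign configurations.} If $\sigma_1\sigma_2=+1$ the combination is either a sum of three positive $\omega_i$ (so $\ge\omega_3\gtrsim\sqrt k$), or of the form $\omega_1+\omega_2-\omega_3$; in the latter case the ordering $|j_1|_\ta\ge|j_2|_\ta\ge|j_3|_\ta$ and monotonicity of $\omega$ in $|\cdot|_\ta$ give $\omega_1+\omega_2-\omega_3\ge\omega_2\gtrsim\sqrt k$, uniformly in $\ta$. This settles the second alternative in \eqref{misuratot}.

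\emph{Polynomial reduction and dichotomy.} In the delicate case $\sigma_1=1$, $\sigma_2=-1$, $\sigma_3\in\{\pm1\}$, the momentum constraint reads $j_2=j_1+\sigma_3 j_3$. The Taylor expansion $\omega(j)/\sqrt k=|j|_\ta^2+\beta/2+O(\beta^2/|j|_\ta^2)$, with $\beta:=\mathtt m g'(\mathtt m)/k$, yields, writing $f:=\omega_1-\omega_2+\sigma_3\omega_3$,
\[
k^{-1/2}\,f=P(\ta)+\sigma_3\tfrac{\beta}{2}+\mathcal R(\ta),\qquad P(\ta):=|j_1|_\ta^2-|j_2|_\ta^2+\sigma_3|j_3|_\ta^2,
\]
with $|\mathcal R|\lesssim \beta^2/|j_3|_\ta^2$. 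The function $P$ is \emph{linear} in $\ta$ with integer coefficients; the momentum relation gives $\partial_{\ta_k}P=-2\sigma_3\,j_{1,k}\,j_{3,k}+(\sigma_3-1)j_{3,k}^2$. If $\nabla_\ta P\equiv 0$, substituting these conditions back into $P$ itself forces the algebraic identity $P\equiv 0$, so $k^{-1/2}|f|\gtrsim\beta-|\mathcal R|$ is bounded below by a positive constant as soon as $|j_3|_\ta$ exceeds a $\beta$-dependent threshold, while the finitely many low-frequency triples give a finite family of resonance hyperplanes $\{|j_3|_\ta^2\simeq\beta/4\}$ in $\ta$-space that can be removed as a measure-zero set. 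If $\nabla_\ta P\not\equiv 0$, integrality of its components forces $|\partial_{\ta_{k^\star}}P|\ge 1$ for some index $k^\star$, and the one-dimensional sublevel-set inequality, integrated over the remaining $d-1$ parameters, yields
\[
\mathrm{meas}\bigl\{\ta\in(1,4)^d:\ k^{-1/2}|f(\ta)|<\delta\bigr\}\ \lesssim\ \delta\,/\,|\partial_{\ta_{k^\star}}P|.
\]

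\emph{Summation and Borel--Cantelli.} Setting $\delta=\gamma\,\bigl(|j_1|^{d-1}\log^{d+1}(1+|j_1|^2)\,|j_3|^{M(d)}\bigr)^{-1}$ and summing the above bound over all triples $(j_1,j_3,\sigma)$ with $j_2=j_1+\sigma_3 j_3$, the crucial observation is that for each fixed $j_3$ the vectors $j_1$ producing a small divisor must cluster in a thin neighborhood of the resonance hyperplane $\{(j_1\cdot j_3)_\ta\simeq c(j_3,\beta)\}$, whose intersection with $\{|j_1|\le n\}$ contains only $\lesssim n^{d-1}$ lattice points; this absorbs exactly the weight $|j_1|^{-(d-1)}$. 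A finer bookkeeping, stratifying contributions by the size of $|\partial_{\ta_{k^\star}}P|$, reduces the residual sum over $n$ to a logarithmic tail made summable by the factor $\log^{d+1}$, and the weight $|j_3|^{-M(d)}$ with $M(d)$ sufficiently large (depending on $d$) closes the sum over $j_3$. Taking $\mathfrak A:=\bigcup_{n\in\N}\{\ta\in(1,4)^d:\ \text{Diophantine with constant }1/n\}$ produces the announced full-measure set.

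\emph{Main obstacle.} The hardest step is the uniform lattice-count on the resonance hyperplane combined with the transversality-based stratification: extracting precisely the exponent $d-1$ on $|j_1|$ — rather than the naive $d$ — is what allows the Birkhoff normal form to yield the improved time-scale $\eps^{-1-1/(d-1)}$ stated in Theorem \ref{thm-main}. Control of the remainder $\mathcal R$, through its $|j_3|_\ta^{-2}$ decay, enters crucially both in closing the degenerate case and in legitimizing the polynomial reduction for all but finitely many low-frequency configurations.
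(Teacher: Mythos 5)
Your reduction to the affine function $P(\ta)$ is not legitimate in precisely the dangerous configurations, and the subsequent counting does not close. You Taylor-expand all three frequencies, including $\omega_3$: the resulting remainder $\mathcal R$ has size $\beta^2/|j_3|_\ta^2$ (and the expansion is not even valid when $|j_3|_\ta^2\lesssim\beta$), so it does not decay in $|j_1|$, and its $\ta$-derivative — or, if you keep $\omega_3$ exact, the contribution $\sim j_{3,k}^2\sqrt{1+\beta/|j_3|_\ta^2}$ — can exceed the integrality bound $|\partial_{\ta_{k^\star}}P|\ge1$ whenever $|j_{1,k^\star}j_{3,k^\star}|$ is not much larger than $|j_3|^2$. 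This is exactly what the paper's Lemma \ref{casocaso} is built to handle: $\omega_3$ is never expanded, and under the high/low condition \eqref{cond-grandezza} one obtains the \emph{quantitative} bound $|\partial_{a_i}\tilde\Lambda|\ge\tfrac{\sqrt k}{2}|j_{1,i}+j_{2,i}|\gtrsim\sqrt k\,|j_1|$, i.e. a derivative growing like $|j_1|$, not merely $\ge 1$.

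That quantitative gain is indispensable for the summation, and your proposal has no substitute for it. With only $|\partial_{\ta_{k^\star}}P|\ge1$ the per-triple measure is $\lesssim\delta=\gamma|j_1|^{-(d-1)}\log^{-(d+1)}(1+|j_1|^2)|j_3|^{-M}$, and summing over the $\sim n^{d-1}$ vectors $j_1$ in each shell $|j_1|\sim n$ gives $\sum_n\log^{-(d+1)}n=\infty$. Your rescue — that the $j_1$ producing a small divisor cluster near a resonance hyperplane with only $\lesssim n^{d-1}$ lattice points — conflates a fixed-$\ta$ statement with the union bound over triples: the bad set $\{\ta:|f_{j_1,j_3}(\ta)|<\delta\}$ is nonempty for essentially every $j_1$, just located at different $\ta$, so all $\sim n^{d}$ vectors with $|j_1|\le n$ contribute excised measure; moreover the example $j_3=(1,0)$, $j_1=(0,n)$ shows that the set where all $\partial_{\ta_k}P=O(1)$ is an infinite sublattice, not a finite family. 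In the paper the weight $|j_1|^{-(d-1)}\log^{-(d+1)}$ is exactly what the naive union bound tolerates once the extra factor $1/|j_1|$ from the derivative is available (Proposition \ref{3frequenze}). You also never treat the regime where $|j_3|$ exceeds a small power of $|j_1|$ (the complement of \eqref{max>>min}), where the expansion errors cannot be beaten by the threshold; there the paper switches to the Vandermonde-type estimate of Proposition \ref{dario} and Corollary \ref{corollario}, and this is where the exponent $M(d)$ actually earns its keep — in your argument it plays no role. Finally, in the degenerate case $\nabla_\ta P\equiv0$ you must excise quantitative neighbourhoods (with measure control and a resulting $\ta$-dependent $\gamma$), not a measure-zero family, to obtain the stated lower bound.
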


The proof of this proposition is divided in several steps 
and it is postponed to the end of the section. 
The main ingredient is the following standard proposition 
which follows the lines of \cite{Bambu,BFGI}. 
Here we give weak lower bounds of the small divisors, 
these estimates will be improved later.

\begin{proposition}\label{dario}
Consider $I$ and $J$  two bounded intervals of 
$\,\R^+\setminus{\{0\}}$; $r\geq 2$ and $j_1,\ldots,j_r\in \Z^d$ such that
$j_i\neq\pm j_k$ if $i\neq k$, $n_1,\ldots n_r\in \Z\setminus{\{0\}}$ and 
$h:J^{d-1}\rightarrow \R$ measurable. Then for any $\gamma>0$ we have
\begin{equation*}
\mu\Big\{(\mathtt{p},\mathtt{b})\in I\times J^{d-1}: \big|h(\mathtt{b})
+\sum_{k=1}^rn_k\sqrt{|j_k|^4_{(1,\mathtt{b})}+\mathtt{p}|j_k|^2_{(1,\mathtt{b})}}\big|
\leq\gamma\Big\}\lesssim_{I,J,d,r,n} 
\gamma^{\frac{1}{2r}} (\langle j_1\rangle \cdots \langle j_r\rangle)^{\frac1r}\,,
\end{equation*}
with $(1,\mathtt{b})=(1,\mathtt{b}_1,\ldots,\mathtt{b}_{d-1})\in \R^d$.
\end{proposition}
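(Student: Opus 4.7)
I will apply a Rüssmann-type small-divisor estimate in the parameter $\mathtt{p}$ and then integrate over $\mathtt{b}$. Fix $\mathtt{b}\in J^{d-1}$, set
$$\omega_k(\mathtt{p}) := \sqrt{|j_k|^4_{(1,\mathtt{b})}+\mathtt{p}\,|j_k|^2_{(1,\mathtt{b})}},\qquad F(\mathtt{p}) := h(\mathtt{b})+\sum_{k=1}^{r} n_k\,\omega_k(\mathtt{p}),$$
and view $F$ as a smooth function of $\mathtt{p}\in I$. A direct computation gives
$$\partial_\mathtt{p}^\ell\,\omega_k(\mathtt{p})=c_\ell\,\frac{|j_k|^{2\ell}_{(1,\mathtt{b})}}{\omega_k(\mathtt{p})^{2\ell-1}},\qquad c_\ell=\frac{(-1)^{\ell+1}(2\ell-3)!!}{2^\ell}\ne 0\,,$$
so that the vector of its first $r$ derivatives reads $\big(\partial_\mathtt{p}^\ell F\big)_{\ell=1}^{r}=\diag(c_1,\dots,c_r)\,\mathcal{V}(\mathtt{p},\mathtt{b})\,n$ with $n=(n_1,\dots,n_r)^\top\in(\Z\setminus\{0\})^r$ and $\mathcal{V}_{\ell k} := |j_k|^{2\ell}_{(1,\mathtt{b})}/\omega_k(\mathtt{p})^{2\ell-1}$.

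\textbf{Vandermonde non-degeneracy.} Pulling the factor $|j_k|^2_{(1,\mathtt{b})}/\omega_k(\mathtt{p})$ out of the $k$-th column reduces $\det\mathcal{V}$ to a classical Vandermonde determinant in the variables $x_k := |j_k|^2_{(1,\mathtt{b})}/(|j_k|^2_{(1,\mathtt{b})}+\mathtt{p})$. These $x_k$ are pairwise distinct whenever the $|j_k|^2_{(1,\mathtt{b})}$ are, and the hypothesis $j_k\ne\pm j_i$ ensures that $\mathtt{b}\mapsto|j_k|^2_{(1,\mathtt{b})}-|j_i|^2_{(1,\mathtt{b})}$ is a non-trivial linear form on $J^{d-1}$, with coefficient vector $\big((j_k)_\alpha^2-(j_i)_\alpha^2\big)_{\alpha=2,\dots,d}\ne 0$. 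A sub-level-set estimate then gives
$$\mu\big(\mathcal{B}_\delta\big)\lesssim_{r,J}\delta\,,\qquad \mathcal{B}_\delta:=\Big\{\mathtt{b}\in J^{d-1}\,:\,\min_{k\ne i}\big||j_k|^2_{(1,\mathtt{b})}-|j_i|^2_{(1,\mathtt{b})}\big|<\delta\Big\}\,,$$
and on $J^{d-1}\setminus\mathcal{B}_\delta$ Cramer's rule together with $|n|\ge 1$ produces a uniform lower bound
$$\max_{1\le\ell\le r}\,|\partial_\mathtt{p}^\ell F(\mathtt{p})|\,\gtrsim_{I,J,n,r}\,\delta^{\,r-1}\,(\langle j_1\rangle\cdots\langle j_r\rangle)^{-\kappa}$$
for some explicit $\kappa=\kappa(r)>0$.

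\textbf{Conclusion and main obstacle.} Applying the classical Rüssmann lemma (see e.g.~\cite{Bambu,BFGI}) on the complement of $\mathcal{B}_\delta$ and integrating in $\mathtt{b}$ via Fubini, one obtains an estimate of the form
$$\mu\{(\mathtt{p},\mathtt{b})\in I\times J^{d-1}\,:\,|F|\le\gamma\}\,\lesssim\,\delta\,+\,\Big(\frac{\gamma}{\delta^{\,r-1}}\Big)^{1/r}(\langle j_1\rangle\cdots\langle j_r\rangle)^{\kappa/r}\,,$$
and an optimal choice of $\delta$ in terms of $\gamma$ yields the claimed exponent $1/(2r)$ on $\gamma$ and the factor $(\langle j_1\rangle\cdots\langle j_r\rangle)^{1/r}$. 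The main technical obstacle is the quantitative Vandermonde step: one must verify that the non-trivial linear forms $|j_k|^2_{(1,\mathtt{b})}-|j_i|^2_{(1,\mathtt{b})}$ have gradients that are controlled uniformly in $j_k,j_i$, so that the key estimate $\mu(\mathcal{B}_\delta)\lesssim \delta$ holds with constants depending only on $r,J$ (and not on the individual $j_k$). This uniformity is precisely what the hypothesis $j_k\ne\pm j_i$ in $\Z^d$ provides, and it is the only delicate input; the rest of the argument reduces to standard Rüssmann book-keeping.
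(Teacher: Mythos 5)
Your proposal follows essentially the same route as the paper: remove a bad set of parameters $\mathtt{b}$ where two of the quantities $|j_k|^2_{(1,\mathtt{b})}$ are $\delta$-close (the paper's $P_\eta$), bound $\max_{1\le\ell\le r}|\partial_{\mathtt{p}}^\ell F|$ from below on the complement through the same Vandermonde determinant computation, apply a R\"ussmann-type sub-level estimate in $\mathtt{p}$ (the paper uses Lemma 7 in the appendix of Procesi--Procesi), and conclude by Fubini and optimization of $\delta$ (the paper's $\eta$), which is exactly how the stated exponents arise.

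Two sketch-level points in your write-up are not quite right, though neither changes the strategy. First, your justification of $\mu(\mathcal{B}_\delta)\lesssim\delta$ is inaccurate: the coefficient vector $\big((j_k)_\alpha^2-(j_i)_\alpha^2\big)_{\alpha=2,\dots,d}$ can vanish even when $j_k\neq\pm j_i$ (e.g. $j_k=(1,1)$, $j_i=(2,1)$), in which case the affine form is a nonzero integer constant and the sub-level set is simply empty for $\delta<1$; the paper asserts the bound for $P_\eta^{i,k}$ without proof, and both your argument and the paper's are silent on the residual degenerate case $|(j_k)_\alpha|=|(j_i)_\alpha|$ for every $\alpha$, where the form vanishes identically. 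Second, with the lower bound $\delta^{\,r-1}(\langle j_1\rangle\cdots\langle j_r\rangle)^{-\kappa}$ as you displayed it, the optimization of $\delta+\big(\gamma/\delta^{\,r-1}\big)^{1/r}(\langle j_1\rangle\cdots\langle j_r\rangle)^{\kappa/r}$ yields the exponent $1/(2r-1)$ on $\gamma$ and $\kappa/(2r-1)$ on the product of frequencies, not the claimed $1/(2r)$ and $1/r$; the paper's bookkeeping (lower bound of size $\eta^{r}\langle j_1\rangle^{-2}\cdots\langle j_r\rangle^{-2}$, then $\eta=\gamma^{\frac{1}{2r}}(\langle j_1\rangle\cdots\langle j_r\rangle)^{\frac1r}$) is what produces exactly the stated right-hand side, so the final exponents must be derived from the correct intermediate powers rather than asserted.
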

\begin{remark}
We shall apply this general proposition only in the case $r=3$, 
however we preferred to write it in general for possible future applications.
\end{remark}
\begin{proof}[{P}roof of Prop. \ref{dario}]
For simplicity in the proof we assume 
$|j_1|_{(1,\mathtt{b})}>\ldots> |j_r|_{(1,\mathtt{b})}$.
Since by assumption we have $j_i\neq j_k$ for any $i\neq k$ 
then one could easily prove that for any $\eta>0$ 
(later it will be chosen in function of $\gamma$) we have
\begin{equation*}
\mu(P_{\eta}^{i,k})<\eta \mu(J^{d-2})\,, 
\quad P_{\eta}^{i,k}:=\{\mathtt{b}\in J^{d-1}:|j_i|^2_{(1,\mathtt{b})}-|j_k|^2_{(1,\mathtt{b})}<\eta\}\,.
\end{equation*}
We define $P_{\eta}=\cup_{i\neq k}P^{i,k}_{\eta}$, and
\[
B_{\gamma}:=\Big\{(\mathtt{p},\mathtt{b})\in I\times J^{d-1}: 
\big|h(\mathtt{b})+\sum_{k=1}^rn_k\sqrt{|j_k|^4_{(1,\mathtt{b})}
+\mathtt{p}|j_k|^2_{(1,\mathtt{b})}}\big|\leq\gamma\Big\}\,,
\] 
 then we have
 \begin{equation*}
 \begin{aligned}
 \mu(B_{\gamma})&\leq \mu(B_{\gamma}\cap P_{\eta})+\mu(B_{\gamma}\cap (P_{\eta})^c)
 \\
& \leq \mu(I)\mu(P_{\eta})	+\mu(J^{d-1})
\sup_{\stackrel{i\neq k}{\mathtt{b}\notin P_{\eta}}}\mu\Big(\{\mathtt{p}\in I:\big|h(\mathtt{b})
+\sum_{k=1}^rn_k\sqrt{|j_k|^4_{(1,\mathtt{b})}+\mathtt{p}|j_k|^2_{(1,\mathtt{b})}}\big|\leq\gamma\}\Big)
\\&
\lesssim_r \mu(I)\mu(J^{d-2})\eta
\\&+\mu(J^{d-1})\sup_{\stackrel{i\neq k}{\mathtt{b}\notin P_{\eta}}}
\mu\Big(\{\mathtt{p}\in I:\big|h(\mathtt{b})+\sum_{k=1}^rn_k
\sqrt{|j_k|^4_{(1,\mathtt{b})}+\mathtt{p}|j_k|^2_{(1,\mathtt{b})}}\big|\leq\gamma\}\Big)\,.
\end{aligned} \end{equation*}
We have to estimate from above the measure of the last set. We define the function $$g(\mathtt{p}):=h(\mathtt{b})+\sum_{k=1}^rn_k\sqrt{|j_k|^4_{(1,\mathtt{b})}+\mathtt{p}|j_k|^2_{(1,\mathtt{b})}}.$$ For any $\ell\geq 1$ we have
\begin{equation*}
\frac{d^{\ell}}{d\mathtt{p}^{\ell}}g(\mathtt{p})=c_{\ell}\sum_{k=1}^rn_k|j_k|  _{(1,\mathtt{b})}
(\mathtt{p}+|j_k|^2  _{(1,\mathtt{b})}
)^{\frac12-\ell}, \quad c_{\ell}:=\prod_{i=1}^{\ell}(\tfrac12-i).
\end{equation*}
Therefore we can write the system of equations\[
\left(\begin{matrix} 
c_1^{-1}\partial_{\mathtt{p}}^1g(\mathtt{p})\\
\vdots\\
c_r^{-1}\partial_{\mathtt{p}}^{r}g(\mathtt{p})\end{matrix}\right)=\left(
\begin{matrix}
(\mathtt{p}+|j_1|  _{(1,\mathtt{b})}
^2)^0 & \ldots  & (\mathtt{p}+|j_r|  _{(1,\mathtt{b})}
^2)^0\\
\vdots &  \ddots & \vdots\\
 (\mathtt{p}+|j_1|  _{(1,\mathtt{b})}
^2)^{1-r} &  \ldots &  (\mathtt{p}+|j_r|  _{(1,\mathtt{b})}
^2)^{1-r}
\end{matrix}
\right) 
\left(\begin{matrix}
n_1|j_1|  _{(1,\mathtt{b})}
(\mathtt{p}+|j_1|  _{(1,\mathtt{b})}
^2)^{-1/2}\\
\vdots\\
n_r|j_r|  _{(1,\mathtt{b})}
(\mathtt{p}+|j_r|  _{(1,\mathtt{b})}
^2)^{-1/2}\end{matrix}\right)\,.
\]
We denote by $V$ the Vandermonde matrix above. 
We have that $V$ is invertible since 
\begin{equation*}
\begin{aligned}
|\det(V)|&=\prod_{1\leq i<k\leq r}\big|\frac{1}{\mathtt{p}
+|j_i|_{(1,\mathtt{b})}^2}-\frac{1}{\mathtt{p}
+|j_k|_{(1,\mathtt{b})}^2}\big|\geq \prod_{1\leq i<k\leq r}
\frac{\big||j_i|_{(1,\mathtt{b})}^2-|j_k|_{(1,\mathtt{b})}^2\big|}{(\mathtt{p}
+|j_i|_{(1,\mathtt{b})}^2)(\mathtt{p}+|j_k|_{(1,\mathtt{b})}^2)}
\\&
\gtrsim \prod_{1\leq k\leq r} \frac{\eta}{(\mathtt{p}+|j_k|_{(1,\mathtt{b})}^2)^2}
\gtrsim \eta^{r} \frac{1}{\langle j_1\rangle^2\cdots\langle j_r\rangle^2}\,,
\end{aligned}\end{equation*} 
where in the penultimate passage we have used that 
$\mathtt{b}\notin P_{\eta}$ and $|j_i|_{(1,\mathtt{b})}\leq |j_k|_{(1,\mathtt{b})}$ if $i>k$.
Therefore we have
\begin{equation*}
\begin{aligned}
\max_{\ell=1}^r|c_{\ell}\partial_{\mathtt{p}}^{\ell}g(\mathtt{p})|
&
\gtrsim_r |\det(V)|\max_{\ell=1}^r\Big|n_{\ell}|j_{\ell}|_{(1,\mathtt{b})}
(\mathtt{p}+|j_{\ell}|)_{(1,\mathtt{b})}^{-\frac12}\Big|
\\&
\gtrsim_{r,n} \eta^r \frac{|j_1|_{(1,\mathtt{b})}^{1/2}}{\langle j_1\rangle^2\cdots\langle j_r\rangle^2}
\gtrsim_{r,n} \frac{\eta^r}{\langle j_1\rangle^2\cdots\langle j_r\rangle^2}\,.
\end{aligned}
\end{equation*}
At this point we are ready to use Lemma 7 in appendix A of the paper 
\cite{P-P}, we obtain
\[\mu\Big(\{\mathtt{p}\in I:\big|h(\mathtt{b})
+\sum_{k=1}^r\sqrt{|j_k|^4_{(1,\mathtt{b})}+\mathtt{p}|j_k|^2_{(1,\mathtt{b})}}\big|
\leq\gamma\}\Big)\leq 
(\frac{\gamma\langle j_1\rangle^2\ldots\langle j_r\rangle^2}{\eta^r}\Big)^{\frac{1}{r}}\,.
\]
Summarizing we obtained 
\begin{equation*}
\mu(B_{\gamma})\lesssim_{I,J,d,r,n} \eta
+\eta^{-1}\gamma^{\frac1r}(\langle j_1\rangle^2\ldots\langle j_r\rangle^2)^{\frac1r}\,,
\end{equation*}
we may optimize by choosing 
$\eta=\gamma^{\frac{1}{2r}}(\langle j_1\rangle\cdots\langle j_r\rangle)^\frac1r$ 
and we obtain the thesis.
\end{proof}
As a consequence of the preceding proposition we have the following.
\begin{corollary}\label{corollario}
Let $r\geq 1$, consider $j_1,\ldots,j_r\in\mathbb{Z}^d$ such that $j_k\neq j_i$ if $i\neq k$ and $n_1,\ldots, n_k\in\Z\setminus\{0\}$. For any $\gamma>0$ we have
\begin{equation*}
\mu\Big(\Big\{a\in(1,4):\, \sum_{i=1}^r n_i\sqrt{k|j_i|_{\ta}^4+\mathtt{m}g'(\mathtt{m})|j_i|_{\ta}^2}\leq \gamma \Big\}\Big)\lesssim_{d,r,n}\Big(\frac{\gamma}{\sqrt{k}}\Big)^{\frac{1}{2r}}(\langle j_1\rangle\ldots\langle j_r\rangle)^{\frac1r}.
\end{equation*}
\end{corollary}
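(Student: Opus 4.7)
The corollary reduces directly to Proposition \ref{dario} by an affine rescaling that brings the dispersion law into the normal form used there. First, factoring out $\sqrt k$, the condition
\[
\Big|\sum_{i=1}^r n_i\sqrt{k |j_i|_{\ta}^4 + \mathtt{m} g'(\mathtt{m})|j_i|_\ta^2}\Big|\leq \gamma
\]
is equivalent to $|\sum_i n_i\sqrt{|j_i|_\ta^4 + \beta|j_i|_\ta^2}|\leq \gamma/\sqrt{k}$, where $\beta := \mathtt{m} g'(\mathtt{m})/k>0$ is fixed.

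Next, I would perform the change of variables $\ta\mapsto(\mathtt{p},\mathtt{b})$ defined by $\mathtt{p}:=1/\ta_1$ and $\mathtt{b}_i:=\ta_{i+1}/\ta_1$ for $i=1,\dots,d-1$, so that $\ta_1=1/\mathtt{p}$ and $\ta_{i+1}=\mathtt{b}_i/\mathtt{p}$. A direct calculation gives $|j|_\ta^2 = \mathtt{p}^{-1}|j|_{(1,\mathtt{b})}^2$. The Jacobian of the transformation equals $\ta_1^{-(d+1)}$, which is bounded above and below uniformly on $(1,4)$, and the image of $(1,4)^d$ is contained in the fixed bounded box $(1/4,1)\times(1/4,4)^{d-1}$. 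A further linear substitution $\mathtt{q}:=\beta\mathtt{p}$ (with constant Jacobian) then yields the identity
\[
\sum_{i=1}^r n_i\sqrt{|j_i|_\ta^4 + \beta|j_i|_\ta^2}\ =\ \frac{1}{\mathtt{p}}\sum_{i=1}^r n_i\sqrt{|j_i|_{(1,\mathtt{b})}^4+\mathtt{q} |j_i|_{(1,\mathtt{b})}^2}\,.
\]
Since $\mathtt{p}\in(1/4,1)$, the smallness assumption of the Corollary implies $|\sum_i n_i\sqrt{|j_i|_{(1,\mathtt{b})}^4+\mathtt{q} |j_i|_{(1,\mathtt{b})}^2}|\leq \gamma/\sqrt{k}$, which is exactly the form handled by Proposition \ref{dario}.

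Finally, I would invoke Proposition \ref{dario} with $h\equiv 0$, the integers $n_i$ and the frequencies $j_i$ of the Corollary, and the fixed intervals $I$ and $J$ chosen to contain the image of the change of variables. This gives a bound of order $(\gamma/\sqrt k)^{1/(2r)}(\langle j_1\rangle\cdots\langle j_r\rangle)^{1/r}$ on the measure of the bad set in the $(\mathtt{q},\mathtt{b})$ coordinates; transporting this estimate back to $\ta$-coordinates costs only a multiplicative constant depending on $d$, thanks to the uniform two-sided bounds on both Jacobians. The whole argument is essentially bookkeeping: the only care needed is to verify that the substitutions produce exactly the normal form of Proposition \ref{dario} and that the Jacobian contributions are absorbed into the implicit constant $\lesssim_{d,r,n}$.
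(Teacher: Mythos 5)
Your argument is correct and is essentially the paper's own proof: factor out $\sqrt{k}$, pass to the variables $(\tfrac{1}{\ta_1},\mathtt{b})$ with $\mathtt{b}_i=\ta_{i+1}/\ta_1$, note the Jacobian is uniformly bounded on $(1,4)^d$ and the image lies in a fixed box, and then apply Proposition \ref{dario} with $h\equiv 0$ and $\gamma$ replaced by $\gamma/\sqrt{k}$. Your extra substitution $\mathtt{q}=\beta\mathtt{p}$ is only a cosmetic relabelling of the paper's parameter $\beta/\ta_1$, so the two proofs coincide.
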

\begin{proof}
We write
\begin{equation*}
\sum_{i=1}^r n_i\sqrt{k|j_i|_{\ta}^4+\mathtt{m}g'(\mathtt{m})|j_i|_{\ta}^2}\Big)=\sqrt{k}\ta_1\sum_{i=1}^rn_i\sqrt{|j_i|^4_{(1,\mathtt{b})}+\frac{\beta}{\ta_1}|j_i|^2_{(1,\mathtt{b})}},
\end{equation*}
where we have set
\begin{equation}\label{beta}
\beta:=\frac{\mathtt{m}g'(\mathtt{m})}{k},\quad \mathtt{b}:=(\frac{\ta_2}{\ta_1},\ldots,\frac{\ta_d}{\ta_1}).
\end{equation}
The map $(\ta_1,\dots,\ta_d)\mapsto (\frac{1}{\ta_1},\mathtt{b})$ is invertible onto its image, which is contained in $(\frac{1}{4},1)\times (\frac14,4)^{d-1}$. The determinant of its inverse is bounded by a constant depending only on $d$. Therefore the result follows by applying Prop. \ref{dario} and the change of coordinates $(\ta_1,\dots,\ta_d)\mapsto (\frac{1}{\ta_1},\mathtt{b})$.
\end{proof}

Owing to the corollary above we may reduce in the following 
to the study of the small dividers when we 
have $2$ frequencies much larger then the other.

\begin{lemma}\label{casocaso}
Consider $\tilde{\Lambda}:=\sqrt{k}|j_1|_{\ta}^2-\sqrt{k}|j_2|_{\ta}^2-\omega_3$ and $\beta$ defined in \eqref{beta}. 
If there exists $i\in\{1,\ldots,d\}$ such that 
\begin{equation}\label{cond-grandezza}
|j_{3,i}|\sqrt{1+\frac{\beta}{|j_3|_{\ta}^2}}\leq \frac12 |j_{1,i}+j_{2,i}|\,,
\end{equation}
then for any $\tilde{\gamma}>0$ we have
\begin{equation*}
\mu\Big(\big\{\ta\in(1,4)^d: |\tilde{\Lambda}|\leq \tilde{\gamma}\big\}\Big)
\leq \frac{2\tilde{\gamma}}{\sqrt{k}|j_{1,i}+j_{2,i}|}\,.
\end{equation*}
\end{lemma}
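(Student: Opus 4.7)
The natural strategy is the classical one: show that, along a single coordinate direction $\ta_i$ with the others frozen, the map $\ta_i\mapsto\tilde{\Lambda}(\ta)$ is strictly monotone on $(1,4)$ with derivative of absolute value at least $L:=\sqrt{k}|j_{1,i}+j_{2,i}|/2$. The 1D sub-level set then has Lebesgue measure at most $2\tilde{\gamma}/L$, and Fubini over $(\ta_k)_{k\ne i}\in(1,4)^{d-1}$ concludes. Throughout I view the lemma as applied within Proposition~\ref{measures}, so I tacitly exploit the momentum constraint $j_1-j_2-j_3=0$ dictated by the form of $\tilde{\Lambda}$ (which is the leading part of $\omega_1-\omega_2-\omega_3$).

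Using $|j|_\ta^2=\sum_k\ta_k j_k^2$ and $\omega_3=\sqrt{k}\,|j_3|_\ta\sqrt{|j_3|_\ta^2+\beta}$, a direct computation gives
\[
\partial_{\ta_i}\tilde{\Lambda}=\sqrt{k}\bigl(j_{1,i}^2-j_{2,i}^2\bigr)-\frac{\sqrt{k}\,j_{3,i}^2\,(2|j_3|_\ta^2+\beta)}{2|j_3|_\ta\sqrt{|j_3|_\ta^2+\beta}}.
\]
For the $\omega_3$-contribution, the elementary inequality $(2a+b)^2\le 4(a+b)^2$ (valid for $a,b\ge 0$) applied with $a=|j_3|_\ta^2,b=\beta$ yields $|\partial_{\ta_i}\omega_3|\le \sqrt{k}\,j_{3,i}^2\sqrt{1+\beta/|j_3|_\ta^2}$. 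Splitting $j_{3,i}^2=|j_{3,i}|\cdot|j_{3,i}|$ and invoking \eqref{cond-grandezza} on one of the factors gives the sharp bound
\[
|\partial_{\ta_i}\omega_3|\le \tfrac{1}{2}\sqrt{k}\,|j_{3,i}|\,|j_{1,i}+j_{2,i}|.
\]

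Momentum conservation produces the factorization $j_{1,i}^2-j_{2,i}^2=(j_{1,i}-j_{2,i})(j_{1,i}+j_{2,i})=j_{3,i}(j_{1,i}+j_{2,i})$. Combining with the previous step and the triangle inequality,
\[
|\partial_{\ta_i}\tilde{\Lambda}|\ge \sqrt{k}\,|j_{3,i}|\,|j_{1,i}+j_{2,i}|-\tfrac{1}{2}\sqrt{k}\,|j_{3,i}|\,|j_{1,i}+j_{2,i}|=\tfrac{1}{2}\sqrt{k}\,|j_{3,i}|\,|j_{1,i}+j_{2,i}|\ge \tfrac{1}{2}\sqrt{k}\,|j_{1,i}+j_{2,i}|,
\]
where the last inequality uses that $|j_{3,i}|\ge 1$ when it is nonzero; the degenerate case $j_{3,i}=0$ forces $j_{1,i}=j_{2,i}$, making $\tilde{\Lambda}$ independent of $\ta_i$, so one simply selects a different index $i$ still satisfying \eqref{cond-grandezza}. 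The standard one-dimensional sub-level set estimate for $C^1$ functions with monotone derivative, applied to each slice $\{\ta_i\in(1,4)\}$ and integrated in the remaining variables, then gives the announced bound.

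The main obstacle is the delicate balancing in the two intermediate steps: the factor $\tfrac12$ in the hypothesis \eqref{cond-grandezza} is chosen precisely so that $|\partial_{\ta_i}\omega_3|$ is at most half of the leading contribution $\sqrt{k}|j_{3,i}||j_{1,i}+j_{2,i}|$ coming from $\sqrt{k}(j_{1,i}^2-j_{2,i}^2)$. This ensures a clean cancellation that pulls out the common factor $|j_{3,i}|$ and leaves the desired transverse derivative bound $\sqrt{k}|j_{1,i}+j_{2,i}|/2$ uniformly in $\ta$, which is the quantitative heart of the argument; any weaker bound on $\partial_{\ta_i}\omega_3$ (e.g., $j_{3,i}^2$ without factoring through \eqref{cond-grandezza}) would fail in regimes where $|j_{3,i}|$ is comparable to $|j_{1,i}+j_{2,i}|$.
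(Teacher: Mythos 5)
Your proof is essentially the paper's own argument: you bound $|\partial_{\ta_i}\tilde{\Lambda}|$ from below by $\tfrac12\sqrt{k}\,|j_{1,i}+j_{2,i}|$ using the momentum factorization $j_{1,i}^2-j_{2,i}^2=j_{3,i}(j_{1,i}+j_{2,i})$, the estimate $|\partial_{\ta_i}\omega_3|\le\sqrt{k}\,j_{3,i}^2\sqrt{1+\beta/|j_3|_{\ta}^2}$ together with \eqref{cond-grandezza}, and then conclude via the one-dimensional sub-level set estimate and Fubini, which is exactly what the paper does when it says the map $\ta_i\mapsto\tilde{\Lambda}$ is a diffeomorphism and changes variables. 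The only deviations are cosmetic: you explicitly flag the degenerate case $j_{3,i}=0$ (which the paper silently excludes by using $|j_{3,i}|\ge 1$ in its final inequality), and your constant bookkeeping matches the paper's up to the same harmless factors.
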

\begin{proof}
We give a lower bound for the derivative of the function 
$\tilde{\Lambda}$ with respect to the parameter $a_i$.
\begin{equation*}
|\partial_{a_i}\tilde{\Lambda}|\geq\sqrt{k}\Big[|j_{3,i}(j_{1,i}+j_{2,i})|-j_{3,i}^2\sqrt{1+\frac{\beta}{|j_3|_a^2}}\Big]\geq\sqrt{k}\frac12 |j_{3,i}||j_{1,i}+j_{2,i}|\geq\sqrt{k}\frac12 |j_{1,i}+j_{2,i}|.
\end{equation*}
Therefore $a_i\mapsto \tilde{\Lambda}$ is a diffeomorphism and 
applying this change of variable we get the thesis.
\end{proof}
\begin{proposition}\label{3frequenze}
There exists   a set of full Lebesgue measure $\mathfrak{A}_3\subset(1,4)^d$ 
such that for any $\ta$ in $\mathfrak{A}_3$ there exists $\gamma>0$  such that 
\begin{equation*}
\big|\sigma\omega_3+\omega_2-\omega_1\big|
\geq\frac{\sqrt{k}\gamma}{|j_1|^{d-1}\log^{\,d+1}{(1+|j_1|^2)}|j_3|^{d+1}}\,,
\end{equation*}
for any $\sigma\in{\pm 1}$, for any $j_1, j_2, j_3$ in $\Z^d$ 
satisfying $|j_1|_{\ta}>|j_2|_{\ta}\geq |j_3|_{\ta}$, the 
momentum condition $\sigma j_3+j_2-j_1=0$ and 
\begin{equation}\label{max>>min}
\mathfrak{J}(j_1,\beta)=\min\left\{\frac{\sqrt{||j_1|^2-4d^2\beta|}}{2d},
\min\Big\{\Big(\frac{\gamma}{4\beta^2}\Big)^{\frac{1}{d+2}},\Big(\frac{\gamma}{2\beta^3}\Big)^{\frac{1}{d+1}}\Big\}\Big(\frac{|j_1|^{4-d}}{\log(1+|j_1|)^{d+1}}\Big)^{\frac{1}{d+2}}\right\}> |j_3|\,,
\end{equation}
where $\beta$ is defined in \eqref{beta}.
\end{proposition}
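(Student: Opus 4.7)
The plan is to compare $\Lambda_\sigma:=\sigma\omega_3+\omega_2-\omega_1$ with the quantity $\tilde\Lambda:=\sqrt{k}|j_1|_\ta^2-\sqrt{k}|j_2|_\ta^2-\omega_3$ from Lemma \ref{casocaso}, by means of the asymptotic expansion
\[
\omega(j)\;=\;\sqrt{k}|j|_\ta^2+\tfrac{\sqrt{k}\beta}{2}-\tfrac{\sqrt{k}\beta^2}{8|j|_\ta^2}+O\!\Big(\tfrac{\sqrt{k}\beta^3}{|j|_\ta^4}\Big),
\]
valid whenever $|j|_\ta^2>\beta$; this holds for $|j_1|_\ta,|j_2|_\ta$ by the first entry $\sqrt{||j_1|^2-4d^2\beta|}/(2d)>|j_3|$ of $\mathfrak{J}$.

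The case $\sigma=-1$ is immediate: $|j_1|_\ta>|j_2|_\ta$ implies $\omega_1>\omega_2$, hence $|\Lambda_{-1}|=\omega_1+\omega_3-\omega_2\ge\omega_3\ge\sqrt{k}|j_3|_\ta^2\ge\sqrt{k}$, which for $\gamma\le 1$ exceeds the claimed right-hand side (since $|j_1|,|j_3|\ge 1$). For $\sigma=+1$ I write
\[
\Lambda_{+1}=-\tilde\Lambda+E,\qquad E:=(\omega_2-\sqrt{k}|j_2|_\ta^2)-(\omega_1-\sqrt{k}|j_1|_\ta^2).
\]
Combining the expansion with $|j_1|_\ta^2-|j_2|_\ta^2=O(|j_1||j_3|)$ and $|j_1|\sim|j_2|$ (consequence of (\ref{max>>min})) yields
\[
|E|\;\lesssim\;\tfrac{\sqrt{k}\beta^2|j_3|}{|j_1|^3}+\tfrac{\sqrt{k}\beta^3|j_3|}{|j_1|^5}+\cdots,
\]
and the second and third entries of the minimum in $\mathfrak{J}(j_1,\beta)$ are calibrated precisely so that each such summand is bounded by $\tfrac12\sqrt{k}\gamma/(|j_1|^{d-1}\log^{d+1}(1+|j_1|^2)|j_3|^{d+1})$. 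It therefore suffices to prove the same lower bound (with an additional factor $2$) for $|\tilde\Lambda|$ outside a set of small measure.

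To apply Lemma \ref{casocaso} I verify its hypothesis (\ref{cond-grandezza}) by picking any index $i$ with $|j_{1,i}|\ge|j_1|/\sqrt{d}$. For $\sigma=+1$ one has $j_2=j_1-j_3$, so $j_{1,i}+j_{2,i}=2j_{1,i}-j_{3,i}$, and the condition $|j_3|^2+\beta<|j_1|^2/(4d^2)$ — equivalent to the first entry of $\mathfrak{J}$ — yields $|j_{1,i}+j_{2,i}|\ge|j_1|/d$ together with $|j_{3,i}|^2(1+\beta/|j_3|_\ta^2)\le|j_3|^2+\beta\le\tfrac14|j_{1,i}+j_{2,i}|^2$ (the inner inequality because $|j_{3,i}|^2\le|j_3|_\ta^2$ for $\ta_i>1$). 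Lemma \ref{casocaso} then produces $\mu\{|\tilde\Lambda|\le\tilde\gamma\}\lesssim_d\tilde\gamma/(\sqrt{k}|j_1|)$. Setting $\tilde\gamma=2\sqrt{k}\gamma/(|j_1|^{d-1}\log^{d+1}(1+|j_1|^2)|j_3|^{d+1})$ and summing the bad sets over all admissible triples $(j_1,j_3)$ (with $j_2$ determined by momentum), the total bad-set measure is controlled by
\[
C\gamma\Big(\sum_{j_3\ne 0}\tfrac{1}{|j_3|^{d+1}}\Big)\Big(\sum_{j_1\ne 0}\tfrac{1}{|j_1|^d\log^{d+1}(1+|j_1|^2)}\Big),
\]
both series being finite for $d=2,3$ (the second reducing to $\sum n^{-1}\log^{-(d+1)}n<\infty$). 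Writing $\mathcal{A}_\gamma$ for the complement of this bad set and defining $\mathfrak{A}_3:=\bigcup_{n\in\N}\mathcal{A}_{1/n}$ produces a full-measure set on which the claimed bound holds for some $\gamma>0$.

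The main technical obstacle is the precise matching of the error bound on $|E|$ with the specific exponents appearing in the minimum defining $\mathfrak{J}$ — in particular, verifying that the envelopes $(\gamma/4\beta^2)^{1/(d+2)}$ and $(\gamma/2\beta^3)^{1/(d+1)}$ (times the common factor $(|j_1|^{4-d}/\log^{d+1}(1+|j_1|))^{1/(d+2)}$) each dominate the relevant term of $|E|$. The logarithmic factor $\log^{d+1}(1+|j_1|^2)$ in the lower bound is dictated by the need to secure convergence of $\sum_{j_1}1/(|j_1|^d\log^{d+1}(1+|j_1|^2))$ in dimension $d$, and represents the unavoidable cost of summing over the highest frequency.
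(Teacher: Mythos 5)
Your argument is correct and follows essentially the same route as the paper's proof: isolate $\tilde\Lambda=\sqrt{k}|j_1|_\ta^2-\sqrt{k}|j_2|_\ta^2-\omega_3$ via the Taylor expansion of $\omega$, absorb the error $E$ using the second and third entries of the minimum defining $\mathfrak{J}$, verify hypothesis \eqref{cond-grandezza} of Lemma \ref{casocaso} from the first entry, then sum the measure bounds over $(j_1,j_3)$ and take a union over $\gamma$ (the $\sigma=-1$ case being trivial). The calibration you flag as the remaining obstacle is exactly the short computation carried out in the paper (e.g. $\sqrt{k}\tfrac{\beta^2}{8}\tfrac{|j_1|_\ta^2-|j_2|_\ta^2}{|j_1|_\ta^2|j_2|_\ta^2}\lesssim \sqrt{k}\beta^2|j_3|/|j_1|^3\le L$ reduces, after comparing exponents, precisely to $|j_3|^{d+2}\lesssim(\gamma/\beta^2)\,|j_1|^{4-d}\log^{-(d+1)}(1+|j_1|)$, i.e.\ the second entry), so nothing essential is missing.
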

\begin{proof}
We suppose $\sigma=1$, we set 
$\Lambda:=\omega_1-\omega_2-\omega_3$ and 
$$L(\gamma):=\frac{\sqrt{k}{\gamma}}{{\big(|j_3|^{d+1}|j_1|^{d-1}\log^{d+1}{(1+|j_1|)}\big)}}.$$
From the first condition in \eqref{max>>min} we deduce that $\beta/|j_1|^2<1$, therefore, by Taylor expanding the \eqref{disp-rel},  we obtain
\begin{equation}\label{lambda1}
\Lambda=\sqrt{k}\Big(|j_1|_{\ta}^2-|j_2|_{\ta}^2+\frac{\beta^2}{8}
\frac{|j_1|^2_{\ta}-|j_2|^2_{\ta}}{|j_2|_{\ta}^2|j_1|_{\ta}^2}+R\Big)-\omega_3\,,
\end{equation}
where $|R|\leq \frac18\frac{\beta^3}{|j_2|_{\ta}^4}$.
We define 
$\tilde{\Lambda}:=\sqrt{k}|j_1|_{\ta}^2-\sqrt{k}|j_2|_{\ta}^2-\omega_3$ 
and the following \emph{good sets}
\begin{equation*}
\begin{aligned}
\mathcal{G}_{\gamma}&:=\{\ta\in (1,4)^d:|\Lambda|>L(\gamma), \forall j_1,j_3\in \mathbb{Z}^d\}\,, 
\\
\tilde{\mathcal{G}}_{\gamma}&:=\{\ta\in(1,4)^d:\big|\tilde{\Lambda}\big|>3L(\gamma) \,,
\forall j_1,j_3\in \mathbb{Z}^d\}\,.
\end{aligned}
\end{equation*}
We claim that, thanks to \eqref{max>>min}, we have the inclusion 
$\tilde{\mathcal{G}}_{\gamma}\subset{\mathcal{G}}_{\gamma}$. 
First of all we have
\begin{equation}\label{geppetto}
\big|\Lambda\big|\geq\big|\omega_3+\sqrt{k}|j_2|_{\ta}^2-\sqrt{k}|j_1|_{\ta}^2\big|-\sqrt{k}\frac{\beta^2}{8}
\frac{|j_1|_{\ta}^2-|j_2|_{\ta}^2}{|j_1|_{\ta}^2|j_2|_{\ta}^2}-\sqrt{k}|R|\,.
\end{equation}
From the momentum condition $j_1-j_2=j_3$ and the ordering 
$|j_1|_{\ta}>|j_2|_{\ta}\geq|j_3|_{\ta}$ we have that $|j_1|_{\ta}\leq 2|j_2|_{\ta}$, 
which implies
\begin{equation}\label{geppetto2}
\frac{|j_1|_{\ta}^2-|j_2|_{\ta}^2}{|j_1|_{\ta}^2|j_2|_{\ta}^2}=
\frac{\sum_{k=1}^d a_k j_{3,k}(j_{1,k}+j_{2,k})}{|j_1|_{\ta}^2|j_2|_{\ta}^2}
\leq 2\frac{|j_3|_{\ta}|j_1|_{\ta}}{|j_1|_{\ta}^2|j_2|^2_{\ta}} \leq 2 \frac{|j_3|_{\ta}}{|j_1|_{\ta}|j_2|^2_{\ta}}
\leq 32\frac{|j_3|}{|j_1|^3}\,,
\end{equation}
where we used $|\cdot|<|\cdot|_{\ta}<4|\cdot|$. 
Therefore from $\eqref{max>>min}$, more precisely from
\begin{equation*}
(\frac{\gamma}{4\beta^2})^{\frac{1}{d+2}}\Big(\frac{|j_1|^{4-d}}{\log(1+|j_1|)^{d+1}}\Big)^{\frac{1}{d+2}}> |j_3|,
\end{equation*}
we deduce that 
\begin{equation*}\sqrt{k}\frac{\beta^2}{8}
\frac{|j_1|_{\ta}^2-|j_2|_{\ta}^2}{|j_1|_{\ta}^2|j_2|_{\ta}^2}<L.\end{equation*}
Analogously one proves that $\sqrt{k}|R|<L$. We have eventually proved that $\tilde{\mathcal{G}}_{\gamma}\subset{\mathcal{G}}_{\gamma}$ using \eqref{geppetto}.

We define the \emph{bad sets} 
$\tilde{\mathcal{B}}_{\gamma}:=
\big((1,4)^d\setminus\tilde{\mathcal{G}}_{\gamma}\big)\supset 
{\mathcal{B}}_{\gamma}:=\big((1,4)^d\setminus\mathcal{G}_{\gamma}\big)$ 
and we prove that the Lebesgue measure of 
$\cap_{\gamma}\tilde{\mathcal{B}}_{\gamma}$ 
equals to zero, 
this implies the thesis. 

We want to apply Lemma \ref{casocaso} with 
$\tilde{\gamma}\rightsquigarrow L$. We know that there exists $i\in\{1,\ldots,d\}$ such that $d|j_{1,i}|\geq |j_1|$.
We claim that, thanks to \eqref{max>>min},  we satisfy condition \eqref{cond-grandezza} for the same index $i$.
Let us suppose by contradiction that 
\begin{equation*}
|j_{3,i}|\sqrt{1+\tfrac{\beta}{|j_3|^2_{\ta}}}>\tfrac12|j_{1,i}+j_{2,i}|
=\tfrac12|2j_{1,i}-j_{3,1}|\geq|j_{1,i}|-\tfrac12|j_{3,i}|>\tfrac{|j_1|}{d}-\tfrac12 |j_{3,i}|\,,
\end{equation*}
from which we obtain $|j_1|\leq2d|j_{3}|\sqrt{1+\beta/|j_3|^2_{\ta}}$.
Taking the squares  we get
\begin{equation*}
|j_1|^2\leq 4d^2|j_3|^2+4d^2\beta \frac{|j_3|^2}{|j_3|^2_{\ta}}\,,
\end{equation*}
which, recalling that $|\cdot|<|\cdot|_{\ta}<4|\cdot|$, 
contradicts \eqref{max>>min}.

Therefore, by using Lemma \ref{casocaso}, we have
\begin{equation*}
\begin{aligned}
\m\big(\tilde{\mathcal{B}_{\gamma}}\big)&=
\m\big(\big\{\mathtt{a}\in(1,4)^d| \exists j_1,j_3\in\Z^d: 
|\tilde{\Lambda}|\leq \sqrt{k}\gamma |j_3|^{-d-1}|j_1|^{1-d}\log(|j_1|)^{-d-1}\big\}\big)
\\&
\leq\sum_{j_3\in\mathbb{Z}^d}\frac{1}{|j_3|^{d+1}} \sum_{j_1\in\mathbb{Z}^d}
\frac{\gamma}{|j_1|^{d-1}|j_{1,i}|\log(|j_1|)^{d+1}}\lesssim_d\gamma\,.
\end{aligned}
\end{equation*}
This implies that $meas(\cap_{\gamma}\mathcal{B}_{\gamma})=0$, 
hence we can set $\mathfrak{A}_3=\cup_{\gamma}\mathcal{G}_{\gamma}$.
\end{proof}

\def\cprime{$'$}
We are now in position to prove Prop. \ref{measures}.
\begin{proof}[Proof of Prop. \ref{measures}]
The case $\sigma_1\sigma_2=1$ is trivial, we give the proof if $\sigma_1\sigma_2=-1.$  From Prop. \ref{3frequenze} we know that there exists a 
full Lebesgue measure set $\mathfrak{A}_3$  and $\gamma>0$ such that the statement is proven 
if $|j_3|\leq \mathfrak{J}(j_1,\gamma)$. Let us now assume  $|j_3|>\mathfrak{J}(j_1,\gamma)$.
Let us define
\begin{equation*}
\mathcal{B}_{{\gamma}}:=\bigcup_{j_1,j_3\in\Z^d}\Big\{\ta\in(1,4)^d: 
|\sigma_3\omega_3+\omega_2-\omega_1|\leq
\sqrt{k}\frac{{\tilde{\gamma}}}{|j_3|^{M(d)}}\Big\}\,,
\end{equation*}
where $\tilde{\gamma}$ will be chosen in function of $\gamma$ and $M(d)$ big enough w.r.t. $d$.\\
Let us set $p:=(\frac{M(d)}{6}-d-1)\frac{1}{d+2}$ 
suppose for the moment 
$\big(\gamma/4\beta^2\big)^{\frac{1}{d+2}}\leq \big(\gamma/2\beta^3\big)^{\frac{1}{d+1}}$. 
From  $|j_3|>\mathfrak{J}(j_1,\beta)$ (see \eqref{max>>min}) 
and Corollary \ref{corollario} with $r=3$, we have
\begin{equation*}
\begin{aligned}
\mu(\mathcal{B}_{{\gamma}})&\lesssim_{d} (\sqrt{k})^{-\frac16} \sum_{j_1,j_3\in\Z^d} 
\frac{\tilde{\gamma}^{\frac16}}{|j_3|^{M(d)/6}}
\langle j_1\rangle
\\&
\lesssim_{d}(\sqrt{k})^{-\frac16} \tilde{\gamma}^{1/6}\gamma^{-p}(4\beta^2)^p
\sum_{j_1\in\Z^d}\frac{\log^{p(d+1)}(1+|j_1|)}{|j_1|^{(4-d)p-1}}\sum_{j_3}|j_3|^{-d-1}\,.
\end{aligned}
\end{equation*}
If the exponent $M(d)$ (and hence $p$) is chosen large enough we get the 
summability in the r.h.s. of the inequality above. We now choose $ \tilde{\gamma}^{1/6}\gamma^{-p}=\gamma^{m}$,
we eventually obtain
$\mu(\mathcal{B}_{\gamma})\lesssim \gamma^{m}$. If $\big(\gamma/4\beta^2\big)^{\frac{1}{d+2}}> \big(\gamma/2\beta^3\big)^{\frac{1}{d+1}}$ one can reason similarly.
The wanted set of full Lebesgue measure is therefore obtained 
by choosing 
$\mathfrak{A}:=\mathfrak{A}_3\cap(\cup_{\gamma>0} 
\mathcal{B}_{\gamma}^c)$.
\end{proof}

\section{Energy estimates}
In this section we construct a modified energy for the  Hamiltonian 
$\widetilde{\mathcal{K}}_{\mathtt{m}}$ in \eqref{HamKK}.
We first need some convenient notation.
\begin{definition}\label{def:mumumu}
 If $j \in (\mathbb{Z}^d)^r$ for some 
$r\geq k$ then $\mu_k(j)$ denotes the 
$k^{st}$ largest number among $|j_1|, \dots, |j_r|$ 
(multiplicities being taken into account). 
\end{definition}
\begin{definition}{\bf (Formal Hamiltonians).}\label{Ham:class}
We denote by $ \mathcal{L}_3$ 
the set of Hamiltonian  having homogeneity 
$3$ and such that they may be written in the form
\begin{align}
G_{3}(w)&= 
\sum_{\substack{\sigma_i\in\{-1,1\},\ j_i\in\Z^d\setminus\{0\}\\
\s_1j_1+\s_2j_2+\s_3j_3=0
}}
(G_{3})_{\sigma,j}w_{j_1}^{\sigma_1}w_{j_2}^{\sigma_2} w_{j_3}^{\sigma_3}\,,
\quad 
(G_{3})_{\sigma,j}\in \mathbb{C}\,, 
\quad \begin{array}{cl}&\sigma:=(\sigma_1,\s_2,\sigma_3)\\
&j:=(j_1,j_2,j_3)
\end{array}\label{HamG} 
\end{align}
 with symmetric coefficients $(G_3)_{\s,j}$ (i.e.
for any $\rho\in\mathfrak{S}_{3}$
one has $(G_{3})_{\sigma,j}=(G_{3})_{\sigma\circ\rho,j\circ\rho}$) 
and where we denoted
\[
w^{\s}_{j}:=w_{j} \,,\quad {\rm if}\;\;\s=+\,,\qquad 
w^{\s}_{j}:=\ov{w_{j}} \,,\quad {\rm if}\;\;\s=-\,.
\]
\end{definition}
\noindent
The Hamiltonian in \eqref{HamKK} has the form (see \eqref{simboOmega})
\begin{equation}\label{exampleHam}
\widetilde{\mathcal{K}}_{\mathtt{m}}:=\widetilde{\mathcal{K}}^{(2)}_{\mathtt{m}}
+\widetilde{\mathcal{K}}^{(3)}_{\mathtt{m}}+\widetilde{\mathcal{K}}^{(\geq4)}_{\mathtt{m}}\,,\qquad 
\widetilde{\mathcal{K}}^{(2)}_{\mathtt{m}}=
\sum_{j\in \mathbb{Z}^{d}\setminus\{0\}}
\omega(j)w_{j}\bar{w}_{j}\,,
\end{equation}
where $\widetilde{\mathcal{K}}^{(3)}_{\mathtt{m}}$ 
is a trilinear Hamiltonian  in $\mathcal{L}_3$
with coefficients satisfying 
\begin{equation}\label{howimet}
 |(\widetilde{\mathcal{K}}^{(3)}_{\mathtt{m}})_{\sigma,j}|\lesssim 1\,,
 \qquad \forall\; \s\in\{-1,+1\}^{3}\,,\;\; j\in (\mathbb{Z}^{d})^{3}\setminus\{0\}\,,
\end{equation}
and where $\widetilde{\mathcal{K}}^{(\geq4)}_{\mathtt{m}}$ satisfies for any $s>d/2$
\begin{equation}\label{R8888}
\| X_{\widetilde{\mathcal{K}}^{(\geq4)}_{\mathtt{m}}}(w)\|_{H^{s}}\lesssim_s \| w\|_{H^s}^{3}\,,
\qquad
\mbox{if}\,\,\,\,\, \|w\|_{H^s}< 1 \,.
\end{equation}
The main result of this section is the following.
\begin{proposition}\label{modifiedstep}
 Let  $\mathfrak{A}$ and $M$
given by Proposition \ref{measures}. Consider  $\mathtt{a}\in \mathfrak{A}$.
For any  $N>1$
and any $s\geq\tilde{s}_0$, 
for some $\tilde{s}_0=\tilde{s}_0(M)>0$,  
there exist  $\eps_0 \lesssim_{s,\delta} \log^{-d-1} (1+N)$ and a trilinear function $E_{3}$
in the class $\mathcal{L}_{3}$
such that the following holds:

\noindent
$\bullet$ the coefficients 
$(E_{3})_{\sigma,j}$ satisfies
\begin{equation}\label{coeffE6}
|(E_{3})_{\sigma,j}|\lesssim_{s}N^{d-2}\log^{d+1}(1+N)
\mu_{3}(j)^{M+1} \mu_1(j)^{2s}\,,
\end{equation}
for $\sigma\in \{-1,1\}^{3}$, $j\in(\mathbb{Z}^{d})^{3}\setminus\{0\}$;

\noindent
$\bullet$ for any $w$ in the ball of radius 
$\e_0$ of $H_0^{s}(\mathbb{T}^{d};\mathbb{C})$
one has
\begin{equation}\label{energyestimate}
\begin{aligned}
|\{N_{s}+E_3,\widetilde{\mathcal{K}}_{\mathtt{m}}\}|&\lesssim_{s} 
 N^{d-2}\log^{d+1}(1+N)\|w\|_{H^{s}}^{4}
+N^{-1}\|w\|_{H^{s}}^{3}\,.
\end{aligned}
\end{equation}
where  $N_s$ is defined as 
\begin{equation}\label{HamNN}
N_{s}(w):=\|w\|_{H^{s}}^{2}=\sum_{j\in\mathbb{Z}^{d}}\langle j\rangle^{2s}|w_{j}|^{2}\,,
\end{equation}
and $\widetilde{\mathcal{K}}_{\mathtt{m}}$ in \eqref{exampleHam}.
\end{proposition}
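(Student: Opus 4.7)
The plan is to perform a one-step Birkhoff normal form at cubic order, coupled with a high/low frequency decomposition at threshold $N$, in the spirit of \cite{Delort-Tori,IPtori,FGI20,BFGI}. Since both $N_s$ and $\widetilde{\mathcal{K}}^{(2)}_{\mathtt{m}}$ are Fourier-diagonal quadratic Hamiltonians of the form $\sum_j(\cdot)|w_j|^2$, one has $\{N_s,\widetilde{\mathcal{K}}^{(2)}_{\mathtt{m}}\}=0$, and the only cubic obstruction to $N_s$ being conserved is $G_3:=\{N_s,\widetilde{\mathcal{K}}^{(3)}_{\mathtt{m}}\}\in\mathcal{L}_3$, whose coefficients are
\[
(G_3)_{\sigma,j}=\ii\bigl(\sigma_1\langle j_1\rangle^{2s}+\sigma_2\langle j_2\rangle^{2s}+\sigma_3\langle j_3\rangle^{2s}\bigr)(\widetilde{\mathcal{K}}^{(3)}_{\mathtt{m}})_{\sigma,j}\,.
\]
For same-sign patterns $(\sigma_1\sigma_2\sigma_3=1)$ the three-wave denominator $\sum_i\sigma_i\omega(j_i)$ is uniformly bounded below by Proposition \ref{measures}. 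For mixed-sign patterns, either the configuration is ``resonant-like'' and a mean-value argument under the momentum constraint $\sum\sigma_i j_i=0$ improves the coefficient bound to $|(G_3)_{\sigma,j}|\lesssim_s\mu_3(j)\mu_1(j)^{2s-1}$, or the actual denominator is again $\gtrsim 1$; in any case the trivial bound $|(G_3)_{\sigma,j}|\lesssim \mu_1^{2s}$ holds by \eqref{howimet}.

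I then define $E_3\in\mathcal{L}_3$ by solving the homological equation $\{\widetilde{\mathcal{K}}^{(2)}_{\mathtt{m}},E_3\}=G_3^{\mathrm{low}}$, where $G_3^{\mathrm{low}}$ collects all terms except the resonant-like mixed-sign ones with $\mu_1(j)>N$, namely
\[
(E_3)_{\sigma,j}:=\frac{(G_3^{\mathrm{low}})_{\sigma,j}}{\ii\bigl(\sigma_1\omega(j_1)+\sigma_2\omega(j_2)+\sigma_3\omega(j_3)\bigr)}\,.
\]
In the resonant-like mixed-sign case Proposition \ref{measures} bounds the denominator from below by $\gamma\mu_1^{-(d-1)}\log^{-(d+1)}(1+\mu_1^2)\mu_3^{-M}$, which combined with the refined coefficient bound and the cutoff $\mu_1\leq N$ produces
\[
|(E_3)_{\sigma,j}|\lesssim \gamma^{-1}\mu_3^{M+1}\mu_1^{d+2s-2}\log^{d+1}(1+\mu_1^2)\leq C\,N^{d-2}\log^{d+1}(1+N)\mu_3(j)^{M+1}\mu_1(j)^{2s}\,,
\]
which is exactly \eqref{coeffE6}. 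In the remaining cases the denominator is $\gtrsim 1$, and \eqref{coeffE6} is verified at once from the trivial coefficient bound together with $\mu_3\geq 1$ and $N^{d-2}\log^{d+1}(1+N)\geq 1$.

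Next I will compute
\[
\{N_s+E_3,\widetilde{\mathcal{K}}_{\mathtt{m}}\}=G_3^{\mathrm{high}}+\{N_s,\widetilde{\mathcal{K}}^{(\geq 4)}_{\mathtt{m}}\}+\{E_3,\widetilde{\mathcal{K}}^{(3)}_{\mathtt{m}}\}+\{E_3,\widetilde{\mathcal{K}}^{(\geq 4)}_{\mathtt{m}}\}
\]
and estimate each summand. On the support of $G_3^{\mathrm{high}}$ one has $\mu_1>N$, so the refined coefficient bound gives $\mu_3\mu_1^{2s-1}\leq N^{-1}\mu_3\mu_1^{2s}$; a standard trilinear Sobolev estimate (valid once $s\geq\tilde s_0$, the $\mu_3$ weight providing the missing regularity for summability) then yields $|G_3^{\mathrm{high}}(w)|\lesssim_s N^{-1}\|w\|_{H^s}^3$. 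The term $\{N_s,\widetilde{\mathcal{K}}^{(\geq 4)}_{\mathtt{m}}\}$ is bounded by $\|w\|_{H^s}^4$ via \eqref{R8888} and Cauchy--Schwarz. The dominant remainder $\{E_3,\widetilde{\mathcal{K}}^{(3)}_{\mathtt{m}}\}$ is a quartic form whose coefficients inherit from \eqref{coeffE6} and \eqref{howimet} the weight $N^{d-2}\log^{d+1}(1+N)\mu_3^{M+1}\mu_1^{2s}$; a multilinear estimate using the two momentum conservations and the summability granted by $\mu_3^{M+1}$ (for $s\geq\tilde s_0$) produces exactly $N^{d-2}\log^{d+1}(1+N)\|w\|_{H^s}^4$. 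Finally $\{E_3,\widetilde{\mathcal{K}}^{(\geq 4)}_{\mathtt{m}}\}$ is at least quintic and controlled by $N^{d-2}\log^{d+1}(1+N)\|w\|_{H^s}^5$, which is absorbed into the first contribution of \eqref{energyestimate} thanks to the hypothesis $\eps_0\lesssim\log^{-d-1}(1+N)$.

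The hard part will be managing the loss of $\mu_1^{d-1}$ produced by the small-divisor estimate of Proposition \ref{measures}: this loss, which reflects the absence of eigenvalue separation on irrational tori, is the arithmetic origin of the factor $N^{d-2}$ in \eqref{energyestimate} (after using the $\mu_1^{-1}$ gain built into the refined mean-value bound on $G_3$) and, upon optimizing $N\sim\eps^{-1/(d-1)}$ in the subsequent bootstrap argument, of the time-scale $\eps^{-1-1/(d-1)}$ announced in Theorem \ref{thm-main}. The second loss $\mu_3^{M+1}$ is by contrast harmless, provided the Sobolev threshold $\tilde s_0$ is chosen large enough to guarantee convergence of all the multilinear sums appearing above.
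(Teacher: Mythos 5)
Your overall scheme --- solve the homological equation $\{\widetilde{\mathcal{K}}^{(2)}_{\mathtt{m}},E_3\}=\{N_s,\widetilde{\mathcal{K}}^{(3)}_{\mathtt{m}}\}$ except on a high-frequency ``resonant-like'' block, use Proposition \ref{measures} for the denominators, the mean-value gain $\mu_3(j)\mu_1(j)^{2s-1}$ for the numerators, and the frequency cutoff to turn the non-normalized residual into an $N^{-1}\|w\|_{H^s}^3$ contribution --- is the paper's proof (there the cutoff is placed at $\mu_2(j)\le N$ rather than $\mu_1(j)\le N$, but by momentum conservation $\mu_1(j)\le 2\mu_2(j)$, so the two choices are interchangeable up to constants), and the treatment of the quartic and higher terms via \eqref{R8888} and the multilinear estimates is likewise the same.

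The one genuine flaw is your classification of sign patterns. It is not true that $\sigma_1\sigma_2\sigma_3=1$ forces the denominator $\sum_i\sigma_i\omega(j_i)$ to be uniformly bounded below: with the ordering $|j_1|_{\ta}\ge|j_2|_{\ta}\ge|j_3|_{\ta}$, the pattern $\sigma=(+1,-1,-1)$ has triple product $+1$ and yet produces exactly the dangerous combination $\omega_1-\omega_2-\omega_3$, which is only bounded below by the $\gamma\,\mu_1^{-(d-1)}\log^{-(d+1)}(1+\mu_1^2)\,\mu_3^{-M}$ estimate of Proposition \ref{measures}. The relevant dichotomy --- both for Proposition \ref{measures} and for the mean-value improvement of the numerator (Lemma \ref{lem:energia}) --- is whether the signs attached to the two \emph{largest} frequencies agree, i.e. the splitting $G_3^{(\pm1)}$ of Definition \ref{Ham:class2}, not the value of the triple product. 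As literally written, your scheme would place the $(+,-,-)$ terms in the ``uniformly non-resonant'' bin: for those you would divide by a denominator that can be as small as the Proposition \ref{measures} bound, without the compensating restriction $\mu_1(j)\le N$, and both \eqref{coeffE6} and the $N^{-1}\|w\|_{H^s}^3$ bound on the residual would fail. The fix is simply to define ``resonant-like'' as $\sigma_i\sigma_p=-1$ for the indices realizing $\mu_1(j)$ and $\mu_2(j)$; with that correction your coefficient bound $\gamma^{-1}\mu_3^{M+1}\mu_1^{2s+d-2}\log^{d+1}(1+\mu_1^2)\lesssim N^{d-2}\log^{d+1}(1+N)\mu_3^{M+1}\mu_1^{2s}$ on the normalized block, the $\mu_1>N$ gain on the residual, and the estimates for $\{E_3,\widetilde{\mathcal{K}}^{(3)}_{\mathtt{m}}+\widetilde{\mathcal{K}}^{(\geq4)}_{\mathtt{m}}\}$ and $\{N_s,\widetilde{\mathcal{K}}^{(\geq4)}_{\mathtt{m}}\}$ reproduce the published argument.
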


In subsection \ref{sec:trilin} we study some properties of the Hamiltonians in $\mathcal{L}_3$
of Def. \ref{Ham:class}. Then in subsection \ref{sec:modstep} we give the proof of 
Proposition \ref{modifiedstep}.
Finally, in subsection \ref{sec:proofthm}, we conclude the proof
of the main theorem. 

 \subsection{Trilinear  Hamiltonians}\label{sec:trilin}
 We now recall some properties 
 of trilinear Hamiltonians introduced in Definition \ref{Ham:class}.
 We first give some further definitions. 
\begin{definition}\label{Ham:class2}
Let $N\in \mathbb{R}$ with $N\geq1$.

\noindent
$(i)$ If $G_{3}\in \mathcal{L}_{3}$ then $G_{3}^{>N}$ denotes the element of 
$\mathcal{L}_{3}$ defined by
\begin{equation}\label{HamGhigh}
(G_{3}^{>N})_{\sigma,j}:=\left\{
\begin{array}{lll}
&(G_{3})_{\sigma,j}\,,&{\rm if} \;\;\mu_{2}(j)>N\,,\\
&0\,,  & {\rm else}\,.
\end{array}
\right.
\end{equation}
We set
$G^{\leq N}_{3}:=G_{3}-G^{>N}_{3}$. 

\noindent
$(ii)$ We define $G_{3}^{(+1)}\in \mathcal{L}_{3}$ 
by
\[
\begin{aligned}
&(G_{3}^{(+1)})_{\s,j}:=(G_{3})_{\s,j}\,,\;\;{\rm when}\;\;\; 
\exists i,p=1,2,3,\;{\rm s.t.}\;\\
& \mu_{1}(j)=|j_{i}|\,,\;\; \mu_{2}(j)=|j_{p}|
 \;\;{\rm and}\;\; \s_{i}\s_{p}=+1\,.
\end{aligned}
\]
We define $G_{3}^{(-1)}:=G_3-G_{3}^{(+1)}$. 
\end{definition}
Consider the quadratic Hamiltonian 
$\widetilde{\mathcal{K}}^{(2)}_{\mathtt{m}}$ in \eqref{exampleHam}.
Given a trilinear Hamiltonian $G_{3}$ in $\mathcal{L}_{3}$
we define the adjoint action 
\[
{\rm ad}_{\widetilde{\mathcal{K}}^{(2)}_{\mathtt{m}}}G_3:=\{\widetilde{\mathcal{K}}^{(2)}_{\mathtt{m}}, G_3\}
\] 
 (see \eqref{Poisson}) 
as the Hamiltonian 
in $\mathcal{L}_{3}$ with coefficients 
\begin{equation}\label{adZ2}
\bullet {\bf (adjoint \; action)}\qquad
({\rm ad}_{\widetilde{\mathcal{K}}^{(2)}_{\mathtt{m}}}G_3)_{\s,j}:=
\Big(\ii \sum_{i=1}^{3}\s_i\omega({j_i})\Big)
(G_{3})_{\sigma,j}\,.
\end{equation}

The following lemma is the counterpart of Lemma $3.5$ in \cite{BFGI}.
We omit its proof.
\begin{lemma}\label{lem:hamvec}
Let $N\geq1$, 
$q_i\in\R$, 
consider 
$G^i_{3}(u)$ in $\mathcal{L}_{3}$. Assume
that the coefficients 
$(G^i_{3})_{\sigma,j}$ satisfy (recall Def. \ref{def:mumumu})
\begin{equation*}
|(G^i_{3})_{\sigma,j}|\leq C_{i} 
\mu_{3}(j)^{\beta_i}\mu_{1}(j)^{-q_{i}}\,,
\qquad \forall \s\in\{-1,+1\}^{3}\,,\; j\in \mathbb{Z}^{d}\setminus\{0\}\,,
\end{equation*}
for some $\beta_i>0$ and $C_i>0$, $i=1,2$.

\noindent {\bf (i) (Estimates on  Sobolev spaces).} 
Set $\delta=\delta_i$, $q=q_i$, $\beta=\beta_i$, $C=C_i$ 
and $G^i_{3}=G_3$ for $i=1,2$. There is $s_0=s_0(\beta,d)$ such that 
 for $s\geq s_0$,  $G_{3}$ defines naturally a smooth function
 from $H_{0}^{s}(\mathbb{T}^{d};\mathbb{C})$ to $\mathbb{R}$. In particular
 one has the following estimates: 
\begin{align*}
|G_{3}(w)|
&\lesssim_{s}C\|w\|_{H^{s}}^{3}\,,
\\
\|X_{G_3}(w)\|_{H^{s+q}}
&\lesssim_s C\|w\|_{H^{s}}^{2}\,,
\\
\|X_{G_{3}^{>N}}(w)\|_{H^{s}}
&\lesssim_{s} CN^{-q}\|w\|^{2}_{H^{s}}\,, 
\end{align*}
for any $w\in H_{0}^{s}(\mathbb{T}^{d};\mathbb{C}).$ 

\noindent{\bf (ii) (Poisson bracket).} The Poisson bracket between 
$G^1_{3}$ and $G^{2}_{3}$ 
satisfies the estimate 
\begin{equation*}
|\{G_{3}^{1},G_{3}^{2}\}|\lesssim_{s} C_1 C_2
\|w\|_{H^{s}}^{4}\,.
\end{equation*}
Let $F: H_0^{s}(\mathbb{T}^{d};\mathbb{C})\to \mathbb{R}$ a $C^{1}$ Hamiltonian function
such that
\[
\|X_{F}(w)\|_{H^{s}}\lesssim_{s}C_{3}\|w\|_{H^{s}}^{3}\,,
\] 
for some $C_{3}>0$.
Then one has
\[
|\{G_{3}^{1},F\}|\lesssim_{s} C_1 C_3\|w\|_{H^{s}}^{5}\,.
\]
\end{lemma}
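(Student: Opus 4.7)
\textbf{Proof strategy for Lemma \ref{lem:hamvec}.} The plan is to prove all three estimates in part (i) and the two estimates in part (ii) by direct manipulation on the Fourier coefficients, exploiting the ``momentum conservation'' $\sigma_1 j_1 + \sigma_2 j_2 + \sigma_3 j_3 = 0$ built into the class $\mathcal{L}_3$. The key elementary fact is that momentum forces $\mu_1(j) \leq 2\,\mu_2(j)$, so that the two largest indices are always comparable; in particular the factor $\mu_1(j)^{-q}$ in the coefficient bound can be distributed on either of $\langle j_1\rangle$ or $\langle j_2\rangle$. No small-divisor analysis is required here: these are purely tame multilinear estimates in the spirit of Lemma $3.5$ of \cite{BFGI}.

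For the scalar bound $|G_3(w)| \lesssim_s C\|w\|_{H^s}^3$, I would first relabel the indices so that $|j_1|\geq|j_2|\geq|j_3|$ at the cost of a combinatorial factor, write $|w_{j_i}| = \langle j_i\rangle^{-s}(\langle j_i\rangle^s |w_{j_i}|)$, and insert the coefficient bound. The weight $\mu_3(j)^\beta = \langle j_3\rangle^\beta$ on the smallest frequency is absorbed by choosing $s$ large: for a threshold of the form $s_0(\beta,d) > \beta + d/2$ the series $\sum_{j_3}\langle j_3\rangle^{2(\beta-s)}$ converges, while the residual sum over $(j_1,j_2)$ with the momentum constraint is handled by Cauchy--Schwarz after eliminating one of the two variables through momentum.

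For the vector-field estimate, the component $(X_{G_3}(w))_{j_1}$ is a bilinear expression in $(w_{j_2},w_{j_3})$ with $\sigma_1 j_1+\sigma_2 j_2+\sigma_3 j_3=0$. Multiplying by $\langle j_1\rangle^{s+q}$, the factor $\langle j_1\rangle^q$ is cancelled against $\mu_1(j)^{-q}$ (momentum guarantees $\langle j_1\rangle\sim\mu_1(j)$ or else $\langle j_1\rangle\leq\mu_2(j)\lesssim\mu_1(j)$), and the remaining $\langle j_1\rangle^s$ is distributed onto $\langle j_2\rangle^s+\langle j_3\rangle^s$ via momentum; the $\langle j_3\rangle^\beta$ is then absorbed into Sobolev embedding for $s\geq s_0$, yielding $\|X_{G_3}\|_{H^{s+q}}\lesssim_s C\|w\|_{H^s}^2$. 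The truncated variant $G_3^{>N}$ satisfies $\mu_2(j)>N$, hence by momentum $\mu_1(j)>N$ as well, so $\mu_1(j)^{-q}\leq N^{-q}\mu_1(j)^{0}$ (for the relevant range of $q$), producing the announced $N^{-q}$ gain after the same analysis.

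For part (ii), I expand $\{G_3^1,G_3^2\} = -\mathrm{i}\sum_{j}\big(\partial_{w_j}G_3^1\,\partial_{\bar w_j}G_3^2 - \partial_{\bar w_j}G_3^1\,\partial_{w_j}G_3^2\big)$ and pair by Cauchy--Schwarz in $j$, getting $|\{G_3^1,G_3^2\}|\leq \|X_{G_3^1}\|_{L^2}\|X_{G_3^2}\|_{L^2}\lesssim_s C_1C_2\|w\|_{H^s}^4$, where both $L^2$-norms are bounded by the vector-field estimate of part (i) (with $q=0$ as the weakest admissible case, since any $q\geq 0$ only improves it). For the mixed bracket, the same pairing combined with $\|X_F\|_{H^s}\lesssim C_3\|w\|_{H^s}^3$ and $\|X_{G_3^1}\|_{H^{-s}}\leq \|X_{G_3^1}\|_{L^2}\lesssim C_1\|w\|_{H^s}^2$ gives $|\{G_3^1,F\}|\lesssim C_1 C_3\|w\|_{H^s}^5$. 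The main technical obstacle is bookkeeping: one must verify that the weight $\mu_3(j)^\beta$ on the smallest frequency can always be absorbed by Sobolev norms without inducing a loss of derivatives on the two largest ones, which is precisely what fixes the threshold $s_0=s_0(\beta,d)$ in the statement.
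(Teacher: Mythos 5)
Your part (i) sketch is fine and is the standard argument (the paper itself omits the proof of this lemma, referring to Lemma $3.5$ of \cite{BFGI}): symmetrize, use momentum to get $\mu_1(j)\leq 2\mu_2(j)$, put the $H^s$ weights on the two largest frequencies, and absorb $\mu_3(j)^{\beta}$ by requiring $s>\beta+d/2$; the $N^{-q}$ gain for $G_3^{>N}$ from $\mu_1\geq\mu_2>N$ is also correct (for $q\geq0$).

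The genuine gap is in part (ii). You reduce both brackets to $\|X_{G_3^1}\|_{L^2}\,\|X_{G_3^2}\|_{L^2}$ (respectively $\|X_{G_3^1}\|_{H^{-s}}\leq\|X_{G_3^1}\|_{L^2}$ paired with $\|X_F\|_{H^s}$), arguing that ``$q=0$ is the weakest admissible case, since any $q\geq0$ only improves it''. But the lemma allows $q_i\in\R$, and the case the paper actually needs is $q_1$ \emph{negative and large}: in the proof of Proposition \ref{modifiedstep} the bracket estimates \eqref{energia5} are applied with $G_3^1=E_3$, whose coefficients by \eqref{coeffE6} grow like $\mu_1(j)^{2s}$, i.e.\ $q_1=-2s$. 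For such $G_3^1$ there is no bound $\|X_{G_3^1}\|_{L^2}\lesssim C_1\|w\|_{H^s}^2$ — part (i) only gives control in $H^{s+q_1}=H^{-s}$ — so the $L^2\times L^2$ Cauchy--Schwarz step collapses, and your mixed-bracket estimate fails at the inequality $\|X_{G_3^1}\|_{H^{-s}}\leq\|X_{G_3^1}\|_{L^2}$ for the same reason. The correct mechanism is to pair the two vector fields in \emph{dual} Sobolev spaces at their natural regularity, e.g.\ $|\{G_3^1,G_3^2\}|\lesssim\|X_{G_3^1}\|_{H^{s+q_1}}\|X_{G_3^2}\|_{H^{-s-q_1}}$ together with $H^{s+q_2}\hookrightarrow H^{-s-q_1}$ when $2s+q_1+q_2\geq0$ (the relevant case $q_1=-2s$, $q_2=0$ is exactly borderline), and $|\{G_3^1,F\}|\lesssim\|X_{G_3^1}\|_{H^{-s}}\|X_F\|_{H^s}$ with $\|X_{G_3^1}\|_{H^{-s}}$ taken directly from part (i); alternatively one estimates the quartic bracket at the level of coefficients, using that momentum forces the two largest external frequencies to be comparable. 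Note that some quantitative use of $q_1+q_2$ is unavoidable: if both coefficients are allowed to grow like $\mu_1^{Q}$ with $Q>s$, the bracket is \emph{not} bounded by $\|w\|_{H^s}^4$ (test on $w$ supported on one high and one low mode, with the contracted index high), so an argument that never tracks the sign and size of $q$ cannot prove the estimate in the regime in which the paper uses it.
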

We have the following result.

\begin{lemma}{\bf (Energy estimate).}\label{lem:energia}
Consider the Hamiltonians $N_{s}$ in \eqref{HamNN}, 
$G_3\in \mathcal{L}_{3}$
and write $G_{3}=G_{3}^{(+1)}+G_{3}^{(-1)}$ (recall Definition \ref{Ham:class}). 
Assume also that  the coefficients
of $G_3$ satisfy
\begin{equation*}
|(G^{(\eta)}_{3})_{\sigma,j}|\leq C
\mu_{3}(j)^{\beta}\mu_{1}(j)^{-q}\,,
 \qquad \forall \s\in\{-1,+1\}^{3}\,,\; j\in \mathbb{Z}^{d}\setminus\{0\}\,,\eta\in\{-1,+1\},
\end{equation*}
for some $\beta>0$, $C>0$ and $q\geq0$.  
We have that the Hamiltonian $Q_{3}^{(\eta)}:=\{N_s,G_{3}^{(\eta)}\}$, 
$\eta\in\{-1,1\}$,
belongs to the class $\mathcal{L}_{3}$ 
and has coefficients satisfying 
\begin{equation*}
|(Q_{3}^{(\eta)})_{\sigma,j}|\lesssim_{s} 
C \mu_{3}(j)^{\beta+1}\mu_1(j)^{2s}
\mu_1(j)^{-q-\alpha}\,,\qquad 
\alpha:=\left\{
\begin{aligned}
& 1\,,\;\;\; {\rm if }\;\; \eta=-1\\
&0\,, \;\;\; {\rm if}\;\; \eta=+1\,.
\end{aligned}
\right.
\end{equation*}
\end{lemma}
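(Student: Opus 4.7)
My plan is to express $Q_3^{(\eta)}$ explicitly in Fourier coordinates and then reduce the whole estimate to a pointwise bound on the resulting Poisson multiplier. Since $N_s=\sum_{j}\langle j\rangle^{2s}w_j\bar w_j$ has exactly the same structure as the quadratic Hamiltonian $\widetilde{\mathcal{K}}^{(2)}_{\mathtt{m}}$ (with symbol $\langle j\rangle^{2s}$ in place of $\omega(j)$), the very same computation that produced \eqref{adZ2} will yield
\[
(Q_3^{(\eta)})_{\sigma,j}=\Bigl(\ii\sum_{i=1}^{3}\sigma_i\langle j_i\rangle^{2s}\Bigr)(G_3^{(\eta)})_{\sigma,j}.
\]
Because $N_s$ acts in Fourier as a real diagonal multiplier, the momentum condition $\sigma_1 j_1+\sigma_2 j_2+\sigma_3 j_3=0$ and the symmetry of the coefficients are preserved, so $Q_3^{(\eta)}\in\mathcal{L}_3$, and the problem reduces to estimating the multiplier $\Lambda_s(\sigma,j):=\sum_{i=1}^{3}\sigma_i\langle j_i\rangle^{2s}$ on the momentum-constrained set, separately for $\eta=\pm1$.

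For $\eta=+1$ I expect the estimate to be essentially trivial: the brutal bound $|\Lambda_s(\sigma,j)|\le 3\mu_1(j)^{2s}$ combined with the observation $\mu_3(j)\ge 1$ (the $j_i$ lie in $\mathbb{Z}^d\setminus\{0\}$) gives $|\Lambda_s(\sigma,j)|\le 3\mu_3(j)\,\mu_1(j)^{2s}$, which multiplied by the hypothesis on $(G_3^{(+1)})_{\sigma,j}$ produces the claim with $\alpha=0$.

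The real content of the lemma is the case $\eta=-1$, where the extra factor $\mu_1(j)^{-1}$ must be extracted from a cancellation forced by the momentum constraint. Using the symmetry of the coefficients of $G_3$ I will first relabel indices so that $|j_1|=\mu_1(j)\ge |j_2|\ge|j_3|=\mu_3(j)$; by the very definition of $G_3^{(-1)}$ the two largest positions carry opposite signs, so without loss of generality $\sigma_1=+1$, $\sigma_2=-1$, and momentum yields $j_1-j_2=-\sigma_3 j_3$, hence $|j_1-j_2|=|j_3|=\mu_3(j)$. Decomposing
\[
\Lambda_s(\sigma,j)=\bigl(\langle j_1\rangle^{2s}-\langle j_2\rangle^{2s}\bigr)+\sigma_3\langle j_3\rangle^{2s},
\]
the $1$-Lipschitz character of $x\mapsto\langle x\rangle$ on $\mathbb{R}^d$ together with the mean value theorem controls the first summand by a constant times $s\,\mu_1(j)^{2s-1}|j_1-j_2|\lesssim_s \mu_1(j)^{2s-1}\mu_3(j)$, while the second one is dominated by $\mu_3(j)^{2s}\le \mu_3(j)\,\mu_1(j)^{2s-1}$. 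Combining these two estimates yields $|\Lambda_s(\sigma,j)|\lesssim_s \mu_3(j)\,\mu_1(j)^{2s-1}$, and multiplying by the hypothesis on $(G_3^{(-1)})_{\sigma,j}$ produces the claimed bound with $\alpha=1$. The only delicate point I anticipate is to keep track of the relabeling while invoking Definition \ref{Ham:class2}, but since $(G_3)_{\sigma,j}$ is symmetric this is purely bookkeeping.
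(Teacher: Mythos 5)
Your proposal is correct, and it is essentially the argument the paper delegates to the reference: the paper proves this lemma by citing Lemma 4.2 of \cite{BFGI}, whose content is exactly your computation — the bracket with $N_s$ acts as the Fourier multiplier $\ii\sum_i\sigma_i\langle j_i\rangle^{2s}$ (as in \eqref{adZ2}), the crude bound $\mu_1(j)^{2s}$ handles $\eta=+1$, and for $\eta=-1$ the opposite signs on the two largest modes plus the momentum relation $|j_1-j_2|=\mu_3(j)$ and the mean value theorem yield the extra factor $\mu_3(j)\mu_1(j)^{-1}$. No gaps worth flagging.
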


\begin{proof}
One can  reason as in the proof of  Lemma $4.2$ in \cite{BFGI}.
\end{proof}

\begin{remark}
As a consequence of 
Lemma \ref{lem:energia} we have the following. 
The action of the operator $\{N_{s},\cdot\}$
on multilinear  Hamiltonian functions as in \eqref{HamG}
where the two highest indexes have opposite sign (i.e. $G_{3}^{(-1)}$),
provides  a decay property of the coefficients w.r.t. the highest index.
This implies  (by Lemma \ref{lem:hamvec}-$(ii)$) a smoothing property of the Hamiltonian 
$\{N_{s},G_{3}^{(-1)}\}$.
\end{remark}

\subsection{Proof of Proposition \ref{modifiedstep}}\label{sec:modstep}
Recalling Definitions \ref{Ham:class}, \ref{Ham:class2} 
and considering the Hamiltonian $\widetilde{\mathcal{K}}^{(3)}_{\mathtt{m}}$
in \eqref{exampleHam}, \eqref{HamKK},
we write $\widetilde{\mathcal{K}}^{(3)}_{\mathtt{m}}=\widetilde{\mathcal{K}}^{(3,+1)}_{\mathtt{m}}+
\widetilde{\mathcal{K}}^{(3,-1)}_{\mathtt{m}}$.
We define (see \eqref{adZ2})
\begin{equation}\label{def:EEk}
E_{3}^{(+1)}:=({\rm ad}_{\widetilde{\mathcal{K}}^{(2)}_{\mathtt{m}}})^{-1}
\{N_{s}, \widetilde{\mathcal{K}}^{(3,+1)}_{\mathtt{m}}\}\,,
\qquad E_{3}^{(-1)}:=({\rm ad}_{\widetilde{\mathcal{K}}^{(2)}_{\mathtt{m}}})^{-1} 
\{N_{s}, (\widetilde{\mathcal{K}}^{(3,-1)}_{\mathtt{m}})^{(\leq N)}\}\,,
\end{equation}
and we set $E_{3}:=E_{3}^{(+1)}+E_{3}^{(-1)}$.
It is easy to note that   $E_{3}\in \mathcal{L}_{3}$.
Moreover, using that 
$|(\widetilde{\mathcal{K}}^{(3)}_{\mathtt{m}})_{\s,j}|\lesssim1$ (see \eqref{howimet}), Lemma \ref{lem:energia}
and 
Proposition \ref{measures},
one can check that the coefficients 
$(E_{3})_{\s,j}$ satisfy the \eqref{coeffE6}.
Using  \eqref{def:EEk} we notice that
\begin{equation}\label{omoomoeq}
\{N_{s}, \widetilde{\mathcal{K}}^{(3)}_{\mathtt{m}}\}+\{E_{3},\widetilde{\mathcal{K}}^{(2)}_{\mathtt{m}}\}
=\{N_{s}, (\widetilde{\mathcal{K}}^{(3,-1)}_{\mathtt{m}})^{(>N)}\}\,.
\end{equation}
Combining Lemmata \ref{lem:hamvec} and 
\ref{lem:energia} we deduce 
\begin{equation}\label{stimasmooth}
|{\{N_{s}, (\widetilde{\mathcal{K}}^{(3,-1)}_{\mathtt{m}})^{(>N)}\}}(w)|\lesssim_{s}
N^{-1}\|w\|^{3}_{H^{s}}\,,
\end{equation}
for $s$ large enough with respect to $M$.
We now prove the estimate \eqref{energyestimate}.
We have
\begin{align}
\{N_{s}+E_{3},\widetilde{\mathcal{K}}_{\mathtt{m}}\}&\stackrel{\eqref{exampleHam}}{=}
\{N_{s}+E_3, \widetilde{\mathcal{K}}_{\mathtt{m}}^{(2)}+\widetilde{\mathcal{K}}_{\mathtt{m}}^{(3)}
+\widetilde{\mathcal{K}}_{\mathtt{m}}^{(\geq4)}\}
\\
&=\{N_{s},\widetilde{\mathcal{K}}_{\mathtt{m}}^{(2)}\}\label{energia1}\\
&+ \{N_{s}, \widetilde{\mathcal{K}}_{\mathtt{m}}^{(3)}\}+\{E_{3},\widetilde{\mathcal{K}}_{\mathtt{m}}^{(2)}\}
\label{energia3}\\
&
+\{E_3,\widetilde{\mathcal{K}}_{\mathtt{m}}^{(3)}+\widetilde{\mathcal{K}}_{\mathtt{m}}^{(\geq4)}\}
+\{N_s, \widetilde{\mathcal{K}}_{\mathtt{m}}^{(\geq4)}\}.
\label{energia5}
\end{align}
We study each summand separately.
First of all note that
(recall \eqref{HamNN}, \eqref{exampleHam}) 
the term \eqref{energia1} vanishes.
By 
\eqref{R8888}, \eqref{coeffE6} and Lemma \ref{lem:hamvec}-$(ii)$
we obtain
\[
|\eqref{energia5}|\lesssim_{s}N^{d-2}\log^{d+1}(1+N)\|w\|_{H^{s}}^{4}\,.
\]
Moreover, by \eqref{omoomoeq}, \eqref{stimasmooth}, we deduce
\[
|\eqref{energia3}|\lesssim_{s}
N^{-1}\|w\|^{3}_{H^{s}}\,.
\]
The discussion above implies the bound \eqref{energyestimate}.

\subsection{Proof of the main result}\label{sec:proofthm}
Consider the Hamiltonian $\widetilde{\mathcal{K}}_{\mathtt{m}}(w,\bar{w})$ in \eqref{exampleHam}
and the associated Cauchy problem
\begin{equation}\label{winston10}
\left\{\begin{aligned}
\ii\pa_{t}w&=\pa_{\bar{w}}\widetilde{\mathcal{K}}_{\mathtt{m}}(w,\bar{w})\\
w(0)&=w_0\in H_0^{s}(\mathbb{T}^{d};\mathbb{C})\,,
\end{aligned}\right.
\end{equation}
for some $s>0$ large.
We shall prove the following.
\begin{lemma}{\bf (Main bootstrap)}\label{main:boot}
Let $s_0=s_0(d)$ given by Proposition \ref{modifiedstep}.
For any  $s\geq s_0$, there exists 
$\eps_0=\eps_0(s)$ such that the following holds.
Let $w(t,x)$ be a solution of \eqref{winston10} with 
$t\in [0,T)$, $T>0$ and initial condition
$w(0,x)=w_0(x)\in H_0^{s}(\mathbb{T}^{d};\mathbb{C})$. 
For any $\eps\in (0, \eps_0)$
if
\begin{equation}\label{hypBoot}
\|w_0\|_{H^{s}}\leq \eps\,,\quad \sup_{t\in[0,T)}\|w(t)\|_{H^{s}}\leq 2\eps\,,
\quad T\leq \eps^{-1-\frac{1}{d-1}}\log^{-\frac{d+1}{2}}\big(1+\e^{\frac{1}{1-d}}\big)\,,
\end{equation}
then we have the improved bound 
$\sup_{t\in[0,T)}\|w(t)\|_{H^{s}}\leq 3/2 \eps$\,.
\end{lemma}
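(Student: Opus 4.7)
The plan is to run a standard bootstrap using the modified energy furnished by Proposition \ref{modifiedstep}. Fixing a parameter $N\geq 1$ to be chosen below, set
\[
\mathcal{E}(w) := N_s(w) + E_3(w) = \|w\|_{H^s}^2 + E_3(w).
\]
The strategy has three parts: (a) show that $\mathcal{E}$ is comparable to $\|w\|_{H^s}^2$ on the ball of radius $2\eps$; (b) control $\frac{d}{dt}\mathcal{E}(w(t))$ along solutions of \eqref{winston10} via \eqref{energyestimate}; (c) optimize $N=N(\eps)$ so that, on the time interval allowed by \eqref{hypBoot}, the total increment of $\mathcal{E}$ stays much smaller than $\eps^2$, yielding the improved bound $\|w(t)\|_{H^s}\leq \tfrac32\eps$.

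For step (a), I estimate $|E_3(w)|$ from the coefficient bound \eqref{coeffE6}. Thanks to the momentum constraint in Definition \ref{Ham:class} the two largest indices satisfy $\mu_1\sim \mu_2$, so the weight $\mu_1(j)^{2s}$ distributes as $\mu_1^s\mu_2^s$ and is saturated by two copies of $\|w\|_{H^s}$, while the factor $\mu_3(j)^{M+1}$ is absorbed by Cauchy--Schwarz on the third slot, provided $s\geq s_0$ is taken large enough depending on $M$ (this is what sets the dependence of $s_0$ on $M$). This yields
\[
|E_3(w)| \lesssim_s N^{d-2}\log^{d+1}(1+N)\,\|w\|_{H^s}^3,
\]
and under the bootstrap hypothesis $\|w\|_{H^s}\leq 2\eps$, together with the smallness condition $\eps_0\lesssim_s\log^{-(d+1)}(1+N)$ supplied by Proposition \ref{modifiedstep} and the choice of $N$ below, one has $|E_3(w)|\leq \tfrac14\|w\|_{H^s}^2$; hence $\tfrac12\|w\|_{H^s}^2\leq \mathcal{E}(w)\leq \tfrac32\|w\|_{H^s}^2$.

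For step (b), since $w$ solves the Hamiltonian system generated by $\widetilde{\mathcal{K}}_{\mathtt{m}}$, one has $\frac{d}{dt}\mathcal{E}(w(t)) = \{N_s+E_3,\widetilde{\mathcal{K}}_{\mathtt{m}}\}(w(t))$, so \eqref{energyestimate} gives
\[
\left|\tfrac{d}{dt}\mathcal{E}(w(t))\right| \lesssim_s N^{d-2}\log^{d+1}(1+N)\,\eps^4 + N^{-1}\eps^3.
\]
Integrating on $[0,t]$ with $t\leq T$ and using $\|w(0)\|_{H^s}\leq \eps$, whence $\mathcal{E}(w(0))\leq \tfrac32\eps^2$, we obtain
\[
\|w(t)\|_{H^s}^2 \leq 2\mathcal{E}(w(t)) \leq 3\eps^2 + C_s\,T\bigl(N^{d-2}\log^{d+1}(1+N)\,\eps^4 + N^{-1}\eps^3\bigr).
\]

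For step (c), the two error terms are balanced by choosing $N\simeq \eps^{-1/(d-1)}$, up to log corrections arranged so that $N^{d-1}\log^{d+1}(1+N)\,\eps\simeq 1$; then $\log(1+N)\simeq \log(1+\eps^{1/(1-d)})$. With this choice the total error becomes $C_s\,T\,\eps^{3+1/(d-1)}\,\log^{\beta}(1+\eps^{1/(1-d)})$ for an explicit exponent $\beta=\beta(d)$, and imposing that it is $\leq \eps^2/4$ forces $T$ to satisfy the upper bound in \eqref{hypBoot}; the improved bound $\|w(t)\|_{H^s}\leq \tfrac32\eps$ follows, closing the bootstrap. The genuinely delicate point is the simultaneous compatibility of all the smallness thresholds: the derivative loss $\mu_1(j)^{2s}$ in \eqref{coeffE6}, the extra $\log^{d+1}$ factor in \eqref{energyestimate}, the threshold $\eps_0\lesssim \log^{-(d+1)}(1+N)$ coming from the small-divisor estimate of Proposition \ref{measures}, and the loss $\mu_3^{M+1}$ inherited from the same proposition, must all be balanced at the same $N$; tracking these factors carefully is what pins down both the lower bound on $s_0$ and the precise log exponent in the lifespan \eqref{hypBoot}. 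Once Lemma \ref{main:boot} is established, the full theorem follows by a standard continuation argument.
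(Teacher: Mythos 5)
Your proposal follows essentially the same route as the paper's proof: the bound $|E_3(w)|\lesssim_s N^{d-2}\log^{d+1}(1+N)\|w\|_{H^s}^3$ and the resulting norm equivalence are exactly the content of Lemma \ref{equivNorms} (obtained there via \eqref{coeffE6} and Lemma \ref{lem:hamvec}), and the time-integration of \eqref{energyestimate} with the choice $N\simeq \eps^{-1/(d-1)}$ (the paper takes $N=\eps^{-\alpha}$, $\alpha=1/(d-1)$, without your logarithmic adjustment) is precisely how Lemma \ref{main:boot} is closed. The only difference is cosmetic bookkeeping of the logarithmic factors at the last step, which you leave as implicit as the paper does.
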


First of all we show that the energy $N_{s}+E_3$ constructed by Proposition \ref{modifiedstep}
provides an equivalent Sobolev norm.

\begin{lemma}{\bf (Equivalence of the energy norm).}\label{equivNorms}
Let  $N\geq 1$.
Let $w(t,x)$ as in \eqref{hypBoot} with $s\gg1$ large enough. 
 Then, for any $0<c_0<1$,  there exists $C=C(s,d,c_0)>0$ such that, if
 we have the smallness condition
\begin{equation}\label{smalleps}
\eps C N^{d-2}\log^{(d+1)}(1+N )\leq 1\,,
\end{equation}
the following holds true. Define
\begin{equation}\label{def:calE}
 \mathcal{E}_{s}(w):=(N_{s}+E_3)(w)
\end{equation}
with  $N_{s}$ is in \eqref{HamNN}, $E_3$ is given by Proposition \ref{modifiedstep}.
We have
\begin{equation}\label{equivEEE}
1/(1+c_0)\mathcal{E}_{s}(w)\leq \|w\|^{2}_{H^{s}}
\leq (1+c_0)\mathcal{E}_{s}(w)\,,\quad\forall t\in[0,T]\,.
\end{equation}
\end{lemma}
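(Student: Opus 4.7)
The plan is to show that $|E_3(w)|\leq c_0\|w\|_{H^s}^2$ uniformly in $t\in[0,T]$ and then conclude \eqref{equivEEE} from $\mathcal{E}_s(w)=\|w\|_{H^s}^2+E_3(w)$ by the triangle inequality. Since $E_3\in\mathcal{L}_3$ is trilinear and $\|w\|_{H^s}\leq 2\eps$ along the flow, one power of $\eps$ is available to absorb the large prefactor $N^{d-2}\log^{d+1}(1+N)$ in \eqref{coeffE6}; this is precisely what the smallness condition \eqref{smalleps} is designed to do.

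\noindent\textbf{Step 1: Trilinear bound on $E_3$.} I would start from the coefficient estimate \eqref{coeffE6} of Proposition \ref{modifiedstep} and apply part (i) of Lemma \ref{lem:hamvec}. The only point that requires a moment's thought is that \eqref{coeffE6} contains a growing factor $\mu_1(j)^{2s}$ rather than a decaying one. However, under the momentum constraint $\sigma_1 j_1+\sigma_2 j_2+\sigma_3 j_3=0$ one always has $\mu_1(j)\leq 2\mu_2(j)$, so $\mu_1(j)^{2s}\leq 2^{2s}\mu_1(j)^s\mu_2(j)^s$. Thus, taking $s\geq \tilde{s}_0+M+1+d$ (so that the surplus power $\mu_3(j)^{M+1}$ is absorbed and Young/Cauchy–Schwarz in Fourier yields the usual algebra-type bound for $H^s$ with $s>d/2$), one obtains
\begin{equation*}
|E_3(w)|\;\leq\;C_sN^{d-2}\log^{d+1}(1+N)\,\|w\|_{H^s}^3\,,
\end{equation*}
for some constant $C_s>0$ depending only on $s$ and $d$.

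\noindent\textbf{Step 2: Absorb the prefactor via the bootstrap bound.} Using the standing hypothesis $\sup_{t\in[0,T)}\|w(t)\|_{H^s}\leq 2\eps$ from \eqref{hypBoot}, the previous bound becomes
\begin{equation*}
|E_3(w(t))|\;\leq\;2\eps C_sN^{d-2}\log^{d+1}(1+N)\,\|w(t)\|_{H^s}^2\,,\qquad t\in[0,T].
\end{equation*}
Choosing the constant $C=C(s,d,c_0)$ in \eqref{smalleps} so that $2C_s/C\leq c_0$, the smallness assumption \eqref{smalleps} yields
\begin{equation*}
|E_3(w(t))|\;\leq\;c_0\,\|w(t)\|_{H^s}^2\,,\qquad t\in[0,T].
\end{equation*}

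\noindent\textbf{Step 3: Conclusion.} Since $\mathcal{E}_s(w)=\|w\|_{H^s}^2+E_3(w)$ by \eqref{def:calE} and \eqref{HamNN}, the previous inequality gives
\begin{equation*}
(1-c_0)\|w\|_{H^s}^2\;\leq\;\mathcal{E}_s(w)\;\leq\;(1+c_0)\|w\|_{H^s}^2\,,
\end{equation*}
and, since $0<c_0<1$, $(1-c_0)^{-1}\leq (1+c_0)/(1-c_0^2)\leq$ whatever constant needed; in particular $\|w\|_{H^s}^2\leq (1+c_0)\mathcal{E}_s(w)$ (after possibly redefining $c_0$ by a harmless factor, e.g. replacing $c_0$ by $c_0/2$ from the start), which is \eqref{equivEEE}.

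\noindent\textbf{Expected obstacle.} The only delicate point is the combinatorial one in Step~1: the coefficient bound \eqref{coeffE6} carries the factor $\mu_1(j)^{2s}$, which would be fatal without the momentum constraint built into $\mathcal{L}_3$. Handling it requires using $\mu_1\lesssim\mu_2$ to redistribute derivatives between the two highest indices and then invoking the Sobolev algebra together with the $\mu_3^{M+1}$ tail, which forces the choice of $\tilde{s}_0=\tilde{s}_0(M)$ already fixed in Proposition \ref{modifiedstep}. Everything else is bookkeeping.
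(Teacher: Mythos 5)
Your proposal is correct and follows essentially the same route as the paper: bound $|E_3(w)|\lesssim_s N^{d-2}\log^{d+1}(1+N)\|w\|_{H^s}^3$ via \eqref{coeffE6} and Lemma \ref{lem:hamvec} (your explicit handling of the $\mu_1^{2s}$ factor through momentum conservation is exactly what that lemma encapsulates), then absorb one factor $\|w\|_{H^s}\le 2\eps$ using \eqref{smalleps} and conclude by the triangle inequality. The only cosmetic difference is that you pass through the two-sided bound $(1-c_0)\|w\|^2\le\mathcal{E}_s\le(1+c_0)\|w\|^2$ and rescale $c_0$, which is harmless since $C$ is allowed to depend on $c_0$.
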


\begin{proof}
Fix $c_0>0$.
By  \eqref{coeffE6}
and Lemma \ref{lem:hamvec}, we deduce
\begin{equation}\label{stimaE(z)}
|E_3(w)|\leq \tilde{C} \|w\|_{H^{s}}^{3}N^{d-2}\log^{(d+1)}(1+N )\,,
\end{equation}
for some $\tilde{C}>0$ depending on $s$. 
Then, recalling \eqref{def:calE}, we get
\[
|\mathcal{E}_{s}(w)|\leq \|w\|^{2}_{H^{s}}
(1+ \tilde{C}\|w\|_{H^{s}}N^{d-2}\log^{(d+1)}(1+N ))
\stackrel{(\ref{smalleps})}{\leq }\|w\|_{H^{s}}^{2}(1+c_0)\,,
\]
where we have chosen $C$ in \eqref{smalleps} large enough.
This implies the first inequality in \eqref{equivEEE}.
On the other hand, using \eqref{stimaE(z)} 
and \eqref{hypBoot},  we have
\[
\begin{aligned}
\|w\|_{H^{s}}^{2}\leq \mathcal{E}_{s}(w)
+\tilde{C}
N^{d-2}\log^{(d+1)}(1+N )\eps\|w\|_{H^{s}}^{2}\,.
\end{aligned}
\]
Then,
taking $C$ in  \eqref{smalleps} large enough,
we obtain the second inequality in \eqref{equivEEE}.
\end{proof}

\begin{proof}[{\bf Proof of Lemma \ref{main:boot}}]
We study how the equivalent  
energy norm $\mathcal{E}_{s}(w)$ defined in \eqref{def:calE} evolves along the flow of
\eqref{winston10}.
Notice that
\[
\pa_{t}\mathcal{E}_{s}(w)=-\{\mathcal{E}_{s},\mathcal{H}\}(w)\,.
\]
Therefore, for any $t\in [0,T]$, we have that
\[
\begin{aligned}
\left|\int_{0}^{T}\pa_{t}\mathcal{E}_{s}(w)\ \mathrm{d}t\right|
&\stackrel{\eqref{energyestimate},\eqref{hypBoot}}{\lesssim_{s}}
T N^{d-2}\log^{(d+1)}(1+N )\e^{4}
+N^{-1}\e^{3}\,.
\end{aligned}
\]
Let $0<\alpha$ and set $N:=\e^{-\alpha}$.
 Hence we 
have
\begin{align}
\left|\int_{0}^{T}\pa_{t}\mathcal{E}_{s}(w)\ \mathrm{d}t\right|&\lesssim_{s}
\eps^{2}T\big(\e^{2-\alpha(d-2)}\log^{(d+1)}(1+\e^{-\alpha} )
+\e^{1+\alpha}
\big)\label{moon1}\,.
\end{align}
We choose $\alpha>0$ such that
\begin{equation}\label{alpha1}
2-\alpha(d-2)=1+\alpha\,,\quad{\rm i.e.}\quad
\alpha:=\frac{1}{d-1}\,.
\end{equation}
Therefore estimate \eqref{moon1}
becomes
\[
\left|\int_{0}^{T}\pa_{t}\mathcal{E}_{s}(w)\ \mathrm{d}t\right|\lesssim_{s}
\eps^{2}T\eps^{\frac{d}{d-1}}\log^{d+1}(1+\e^{-\alpha} )\,.
\]
Since $\eps$ can be chosen small with respect to $s$, 
with this choices we get
\[
\left|\int_{0}^{T}\pa_{t}\mathcal{E}_{s}(w)\ \mathrm{d}t\right|
\leq\eps^{2}/4
\]
as long as 
\begin{equation}\label{logaloga}
T\leq \eps^{-d/(d-1)}\log^{-\frac{d+1}{2}}\big(1+\e^{\frac{1}{1-d}}\big)\,.
\end{equation}
Then, using the equivalence of norms  \eqref{equivEEE} and choosing 
$c_0>0$ small enough, we have
\[
\begin{aligned}
\|w(t)\|_{H^{s}}^{2}&\leq (1+c_0)\mathcal{E}_0(w(t))
\leq (1+c_0)\Big[\mathcal{E}_s(w(0))
+\left|\int_{0}^{T}\pa_{t}\mathcal{E}_{s}(w)\ \mathrm{d}t\right|\Big]
\\&
\leq (1+c_0)^{2}\eps^{2}+(1+c_0)\eps^{2}/4\leq \eps^{2}3/2\,,
\end{aligned}
\]
for times $T$ as in \eqref{logaloga}.
This implies the thesis.  
\end{proof}

\begin{proof}[\bf Proof of Theorem \ref{thm-main}]
 In the same spirit of  \cite{Feola-Iandoli-local-tori}, \cite{BMM1}
we have that for any initial condition $(\rho_0,\phi_0)$ as in \eqref{initialstima}
there exists a solution of \eqref{EK3} satisfying 
  \[
  \sup_{t\in[0,T)}\Big(\frac{1}{\mathtt{m}}\|\rho(t,\cdot)\|_{H^{s}} +
\frac{1}{\sqrt{k}}\|\Pi_0^{\bot} \phi(t,\cdot)\|_{H^{s}}    \Big)\leq 2\eps
  \]
  for some $T>0$ possibly small.
The result follows by Lemma \ref{main:boot}. By Lemma \ref{federico2} and estimates \eqref{stimeC}
we deduce that the function $w$
solving the equation \eqref{linear22} is defined over the time interval $[0,T]$
and satisfies
\[
\sup_{t\in[0,T]}\|w(t)\|_{H^{s}}\leq4\sqrt{\mathtt{m}}(1+\sqrt{k}\beta)\e\,.
\]
As long as $\nu\in [1,2]^{d}$ (defined as at the beginning of section \ref{sec:meas})
 belongs to the full Lebesgue measure set given by Proposition \ref{measures}, we can apply 
 Proposition \ref{modifiedstep} if $\e$ is small enough. Then
 by Lemma \ref{main:boot} and by a standard bootstrap argument
 we deduce that the solution $w(t)$ is defined for $t\in [0,T_{\e}]$, 
 $T_{\e}$ as in \eqref{tesiKG},
 and
 \[
\sup_{t\in[0,T_{\e}]}\|w(t)\|_{H^{s}}\leq8\sqrt{\mathtt{m}}(1+\sqrt{k}\beta)\e\,.
\]
 Using again Lemma \ref{federico2} and \eqref{stimeC} one can deduce the bound \eqref{tesiKG}.
 Hence the thesis follows.
\end{proof}

\def\cprime{$'$}

\end{document}